\newtheorem{definition}{Definition}[section]
\newtheorem{lemma}[definition]{Lemma}
\newtheorem{theorem}[definition]{Theorem}
\newtheorem{remark}[definition]{Remark}
\newtheorem{question}[definition]{Question}
\DeclareMathOperator{\sign}{sign}
\title{A counterexample to Payne's nodal line conjecture with few holes}
\author{Joel Dahne \and Javier G\'omez-Serrano \and Kimberly Hou}
\begin{document}
\maketitle
\begin{abstract}
  Payne conjectured in 1967 that the nodal line of the second
  Dirichlet eigenfunction must touch the boundary of the domain. In
  their 1997 breakthrough paper, Hoffmann-Ostenhof, Hoffmann-Ostenhof
  and Nadirashvili proved this to be false by constructing a
  counterexample in the plane with many holes and raised the question
  of the minimum number of holes a counterexample can have. In this
  paper we prove it is at most 6.
\end{abstract}

\section{Introduction}
\label{sec:introduction}

Let $\Omega$ be a bounded planar domain, and let $\lambda_i$ be the eigenvalues of the Dirichlet Laplacian, namely the numbers  $0 < \lambda_1 < \lambda_2 \leq \lambda_3 \leq \ldots$ that satisfy
\begin{align}
 -\Delta u_k & = \lambda_k u_k  \text{ in } \Omega \nonumber \\
u_k & = 0  \text{ on } \partial \Omega. \label{laplacian-pde}
\end{align}

The so-called ``nodal line conjecture'' from 1967 by Payne \cite[Conjecture 5, p.467]{Payne:isoperimetric-inequalities}, \cite{Payne:two-conjectures-fixed-membrane} says that the nodal line (the zero level set) of $u_2$ on a bounded domain in $\mathbb{R}^{2}$ must touch the boundary. This statement was later extended by Yau \cite{Yau:open-problems} to the higher dimensional case. Contrary to its apparent simplicity, the conjecture is still open in the general case, though a few results (both positive and negative) and extensions have been done. If the domain is convex, Melas \cite{Melas:nodal-line-conjecture-convex} (in the case of $C^\infty$ boundary) and Alessandrini \cite{Alessandrini:nodal-lines-eigenfunctions-convex} (in the general case) proved it in the positive. Jerison \cite{Jerison:nodal-line-convex-planar-domain} proved the conjecture for long thin convex sets, and Jerison \cite{Jerison:diameter-nodal-line-convex-domain}, Grieser--Jerison \cite{Grieser-Jerison:asymptotics-nodal-line-convex} and Beck--Canzani--Marzuola \cite{Beck-Canzani-Marzuola:nodal-line-estimates} gave more information on the location of the nodal line. Under various additional symmetry and/or convexity assumptions, the conjecture has been proved by Payne \cite{Payne:two-conjectures-fixed-membrane}, Lin \cite{Lin:second-eigenfunction-laplacian}, P\"utter \cite{Putter:nodal-lines-second-eigenfunctions}, Damascelli \cite{Damascelli:nodal-set-second-eigenfunction} and Yang--Guo \cite{Yang-Guo:payne-conjecture-concave-domains}. See also \cite{Grebenkov-Nguyen:structure-eigenfunctions-survey}.

In 1997, Hoffmann-Ostenhof--Hoffmann-Ostenhof--Nadirashvili \cite{HoffmannOstenhof-HoffmannOstenhof-Nadirashvili:nodal-line-payne-counterexample} constructed a counterexample of a planar, bounded, non-simply connected domain for which the nodal line is closed and does not touch the boundary. Their construction was extended by Fournais \cite{Fournais:nodal-surface-closed-Rd} and later by Kennedy \cite{Kennedy:closed-nodal-surfaces-higher-dimensions} for the higher dimensional case. Freitas--Krej\v{c}i\v{r}\'{\i}k \cite{Freitas-Krejcirik:unbounded-domains-payne-conjecture} constructed an unbounded counterexample. Kennedy \cite{Kennedy:toy-neumann-nodal-line} proved similar results (both positive and negative) for a toy Neumann analogue under the assumption of central symmetry of the domain. In both \cite{HoffmannOstenhof-HoffmannOstenhof-Nadirashvili:nodal-line-payne-counterexample,Fournais:nodal-surface-closed-Rd} the main domain construction is based on ``carving'' a sufficiently large number $N$ of holes on a symmetric domain so that the nodal set gets disconnected from the boundary, but no quantitative estimates are provided on $N$ (Kennedy's construction \cite{Kennedy:closed-nodal-surfaces-higher-dimensions} of a simply connected domain is valid in dimensions 3 or higher). Specifically, in \cite{HoffmannOstenhof-HoffmannOstenhof-Nadirashvili:nodal-line-payne-counterexample} the number of holes is claimed to be \textit{delicate to bound} and in \cite{Fournais:nodal-surface-closed-Rd} \textit{of the order of $10^9$}. Hoffmann-Ostenhof, Hoffmann-Ostenhof and Nadirashvili raised the following question circa 25 years ago:

\begin{question}[\cite{HoffmannOstenhof-HoffmannOstenhof-Nadirashvili:nodal-line-payne-counterexample}, Remark 3]
\label{remhohon}
Clearly, an interesting question is whether there exists a simply connected domain for which the second eigenfuntion has a closed nodal line. We do not believe this. So a more general question is: What is the smallest $N_0$ such that there exists a domain with $N_0$ boundary components whose second eigenfunction has a nodal line that does not hit the boundary?
\end{question}

In this paper we provide a partial answer to Question \ref{remhohon},
giving an upper bound of 7 by constructing an example of a domain with
a closed nodal line (see Figure \ref{fig:domain} for an illustration
of the domain and Figure \ref{fig:nodal-line} for the nodal line). Our
main result is the following theorem:

\begin{theorem}\label{MainThm}
There exists a planar domain with 6 holes for which the nodal line of $u_2$ is closed.
\end{theorem}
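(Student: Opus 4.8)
The plan is to follow the broad strategy of \cite{HoffmannOstenhof-HoffmannOstenhof-Nadirashvili:nodal-line-payne-counterexample}, but to make the construction small and completely quantitative, so that every inequality needed is checked by a validated (interval-arithmetic) computation. I would take $\Omega$ to be an explicit domain carrying a dihedral symmetry $D_k$ (with $k$ equal to $5$ or $6$): a base region — a perturbed disk, or a disk with a central hole — together with a ring of congruent small holes, six holes in all, with positions and sizes tuned so that a ``radially shaped'', sign-changing-in-$r$ mode is forced below the ``angular'' ($\cos\theta$-type) mode that is the second eigenfunction of the unperturbed shape. The holes are there precisely to raise the angular eigenvalues: whatever the angular phase, a ring of several symmetrically placed obstacles tends to be more expensive for a $\cos\theta$-type function than for a rotationally mild one. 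Since $\Omega$ is $D_k$-invariant, $H_0^1(\Omega)$ splits into isotypic sectors and the Dirichlet spectrum is the union of the spectra in each sector; write $\mu_j^{(\rho)}$ for the eigenvalues in sector $\rho$, so that $\lambda_1 = \mu_1^{(\mathrm{triv})}$ because the ground state is invariant. The first task is to exhibit — numerically, and then rigorously — a domain for which $\mu_2^{(\mathrm{triv})} < \mu_1^{(\rho)}$ for every nontrivial irrep $\rho$ and $\mu_2^{(\mathrm{triv})} < \mu_3^{(\mathrm{triv})}$; this forces $\lambda_2 = \mu_2^{(\mathrm{triv})}$ to be simple, so that, up to sign and scale, $u_2$ is the first sign-changing $D_k$-invariant eigenfunction.

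The second task is the rigorous, computer-assisted verification of these eigenvalue inequalities. For upper bounds on the $\mu_j^{(\rho)}$ I would use Rayleigh--Ritz with explicit trial functions adapted to each sector, all arithmetic performed with intervals; for the matching lower bounds — the genuinely delicate ingredient — I would use a Lehmann--Goerisch scheme (or a validated method-of-particular-solutions enclosure with rigorous a posteriori residual bounds), sector by sector. It suffices to certify $\mu_2^{(\mathrm{triv})} < \lambda_3$ with a definite margin, together with a lower bound on $\lambda_3 - \lambda_2$, to conclude that the global second eigenfunction is the $D_k$-invariant one and is isolated.

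The third task is to show that the nodal line of this $u_2$ does not meet $\partial\Omega$, which is equivalent to $u_2$ having a definite sign in a collar neighbourhood of every boundary component; by invariance it is enough to do this over a fundamental domain. I would build a validated pointwise enclosure of $u_2$ from the Ritz approximant: bound the $H^1$ residual rigorously, convert it into an $L^\infty$ error on compact subsets of $\overline\Omega$ via elliptic regularity and the (already certified) spectral gap, and read off that $u_2 > 0$ on the outer collar and keeps a constant sign near each hole. Combined with Courant's nodal domain theorem ($u_2$ has at most two nodal domains, hence exactly two since it changes sign), this forces the nodal set to lie in the interior of $\Omega$ — in fact to be a single closed loop — which is the assertion of Theorem \ref{MainThm}; the count of six holes then yields the bound $N_0 \le 7$ in Question \ref{remhohon}.

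The step I expect to be the main obstacle is the rigorous control of $u_2$ near $\partial\Omega$: the eigenfunction is small in the collars, so only a genuinely sharp residual bound will resolve its sign, and the domain must be chosen so that $u_2$ is robustly bounded away from zero there. Closely related, and equally delicate, is the design problem itself: arranging, with only six holes, that $\mu_2^{(\mathrm{triv})}$ sits strictly below all nontrivial-sector eigenvalues \emph{and} that $u_2$ has clean boundary behaviour, both with margins wide enough to survive finite-precision validated arithmetic — this is exactly the quantitative content of the question raised in \cite{HoffmannOstenhof-HoffmannOstenhof-Nadirashvili:nodal-line-payne-counterexample}.
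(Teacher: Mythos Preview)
Your strategy is sound in outline and close in spirit to the paper's, but the implementation differs in one decisive place, and that place is exactly where you anticipate trouble.

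You propose to certify that $u_2$ keeps a fixed sign on a collar neighbourhood of every boundary component. Since $u_2$ vanishes on $\partial\Omega$, any uniform $L^\infty$ bound $|u_2-\tilde u_2|\le d$ becomes useless in the region where $|u_2|<d$; resolving the sign there would require control of the normal derivative or a weighted estimate, neither of which you sketch. The paper avoids this entirely: instead of collars, it fixes a \emph{single} closed curve $\Gamma$ well inside the domain (a small hexagon at distance about $0.385$ from the centre, strictly between the centre and the ring of holes, so that the region it encloses misses $\partial\Omega$), verifies $\tilde u_2\le -c$ on $\Gamma$ for an explicit $c>0$, shows that the Moler--Payne $L^\infty$ error is below $c$, and checks $u_2>0$ at one interior point. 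Courant's theorem then traps the entire nodal set inside the region enclosed by $\Gamma$. So the ``main obstacle'' you flag is not confronted but circumvented, and this is really the key idea that makes the computation feasible.

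The remaining differences are more tactical. The paper does not decompose into isotypic sectors; it isolates $\lambda_2$ globally by (i) a FEM-based rigorous lower bound on $\lambda_5$ via Liu's a~posteriori theorem---which requires a polygonal domain, hence the hexagon with triangular holes rather than your perturbed disk---and (ii) Fox--Henrici--Moler enclosures for $\lambda_1,\dots,\lambda_4$ built from method-of-particular-solutions approximants, with a separate argument (Lemma~\ref{lemma:2-cluster}) to show the overlapping enclosures for $\lambda_3,\lambda_4$ really contain two eigenvalues. Your sector-by-sector Lehmann--Goerisch scheme would also work and has the appeal of reducing to simple spectra in each sector, but the paper's route trades that elegance for tools that are already implemented and validated. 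The pointwise eigenfunction bound is obtained not through an $H^1$ residual and Sobolev embedding but via the explicit Green's-function estimate of Moler--Payne (Theorem~\ref{thm:L-inifinity-bounds}), which needs only the spectral gap and a crude uniform bound on $\|G(x,\cdot)\|_2$.
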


\begin{remark}
We make no claim that $N_0 = 7$ is optimal. Possibly with a more thorough search, more computational time and better estimates one can get to $N_0 = 5$ or lower.
\end{remark}

We summarize the main steps of the proof. The underlying idea is that if we can find a suitable candidate domain and we can find an approximate $u_2$ (in the sense that it satisfies equation \eqref{laplacian-pde} up to a small error) which has a closed nodal line, then there has to exist an exact solution nearby also having a closed nodal line, and no other spurious nodal domains can appear since $u_2$ can have at most two nodal domains. The difficult part is to derive effective (as opposed to asymptotic, either with respect to the number of holes or without explicit constants) stability estimates that are good enough, as well as to find good candidates since the problem seems to be quite unstable in that regard. Moreover, since the candidate domains are far from any explicit domains for which the spectrum and the nodal lines are known, perturbative methods fail. Furthermore, the errors need to be very small due to the nodal line being very close to the boundary. On top of that, the eigenvalue $\lambda_2$ is very close to $\lambda_3$ and $\lambda_4$ which makes difficult to distinguish the eigenfunction associated with the former from the latter two. To overcome these difficulties parts of the proof will be computer-assisted. Nevertheless, one has to proceed very carefully and derive extremely tight bounds to make all the estimates work.


\subsection{The Method of Particular solutions and Validation of Eigenvalues}

Perturbative analysis of the eigenpairs is a classical problem, and
there is an extensive
literature~\cite{Kato:upper-lower-bounds-eigenvalues,Lehmann:optimale-eigenwerte,Behnke-Goerisch:inclusions-eigenvalues,Fox-Henrici-Moler:approximations-bounds-eigenvalues,Moler-Payne:bounds-eigenvalues,Still:computable-bounds-eigenvalues,Barnett-Hassell:quasi-ortogonality-dirichlet-eigenvalues}
showing the existence of an eigenvalue close to an approximate one.
The theme of these results is that if one can find
$(\lambda_\text{app},u_\text{app})$ satisfying the equation up to an
error bounded by $\delta$, then one can show that there is a true
eigenpair $(\lambda,u)$ at a distance $C\delta^k$. The main drawback
is that these methods can not tell the position of the eigenvalue within
the spectrum.

To solve this issue, Plum \cite{Plum:eigenvalues-homotopy-method} proposed a homotopy method linking the eigenvalues of the domain of the problem with another, known domain (the base problem). See also the intermediate method \cite{Weinstein-Stenger:eigenvalues-book,Goerisch:stufenverfahren-eigenwerten,Beattie-Goerisch:lower-bounds-eigenvalues} for another example of connecting the problem to a known domain. Our domain is very far from any known domain (due to the holes), so we have opted for a more direct approach. Using domain monotonicity with inclusion also did not yield good enough (lower) bounds for our purpose. Using the Finite Element framework, Liu and Liu--Oishi \cite{Liu-Oishi:verified-eigenvalues-laplacian-polygons}, \cite{Liu:framework-verified-eigenvalues} have managed to give explicit, rigorous computable lower bounds of the spectrum in terms of solutions of a (big) finite linear system (see also \cite{Carstensen-Gedicke:lower-bounds-eigenvalues} for similar bounds, \cite{You-Xie-Liu:guaranteed-bounds-steklov} for the case of the Steklov problem and \cite{Liu:eigenvalue-bounds-differential-operators} for a more general setting). Nonetheless, the aforementioned bounds are not good enough to obtain Theorem \ref{MainThm} as the eigenvalues are tightly clustered and that would require a mesh so refined we would not be able to handle it on a computer within a reasonable time.

In this paper we will combine the two families as in
\cite{GomezSerrano-Orriols:negative-hearing-shape-triangle}. The first
pass will separate the first 4 eigenvalues from the rest of the
spectrum (using the method of
\cite{Liu:framework-verified-eigenvalues}). The enclosures and the
scale are coarse at this point. The second pass will find 4
approximate eigenpairs below the threshold and use the finer stability
methods to separate \(\lambda_{2}\) from the others. There is a big
technical difficulty since $\lambda_3$ and $\lambda_4$ presumably
correspond to a double eigenvalue, and we have to handle the
circumstance that the span is two-dimensional in that case.

In order to find accurate approximations of the eigenvalues and
eigenfunctions we will use the Method of Particular Solutions (MPS).
This method was introduced by Fox, Henrici and Moler
\cite{Fox-Henrici-Moler:approximations-bounds-eigenvalues} and has
been later adapted by many authors (see
\cite{Antunes-Valtchev:mfs-corners-cracks,Read-Sneddon-Bode:series-method-mps,Golberg-Chen:mfs-survey,Fairweather-Karageorghis:mfs-survey,Betcke:generalized-svd-mps}
as a sample, and the thorough review
\cite{Betcke-Trefethen:method-particular-solutions}). The main idea is
to consider a set of functions that solve the eigenvalue problem
without boundary conditions as a basis, and writing the solution of
the problem with boundary conditions as a linear combination of them,
solving for the coefficients that minimize the error on the boundary.
Typically, the choices have been rational functions
\cite{Hochman-Leviatan-White:rational-function-laplacian} or products
of Bessel functions and trigonometric polynomials centered at certain
points. The different choices of these functions have a big impact on
the performance of the method. Recently, Gopal and Trefethen
\cite{Gopal-Trefethen:new-laplace-solver-pnas,Gopal-Trefethen:laplace-solver-detailed}
have developed a new way of selecting the base functions in such a way
as to yield root exponential convergence (the \textit{lightning
  Laplace solver}). We stress that these methods produce accurate
approximations but there is no explicit control of the error with
respect to the true solution. This is handled a posteriori with a
perturbative analysis of the approximations.

The use of computers to prove mathematically rigorous theorems has
become increasingly popular in the last 20 years and many goals and
theories have been developed in this blooming field. Floating point
arithmetic errors are handled and controlled via interval arithmetic,
where real numbers are replaced by real intervals and all the errors
are propagated throughout the calculations. We refer to the book
\cite{Tucker:validated-numerics-book} for an introduction to validated
numerics, and to the survey \cite{GomezSerrano:survey-cap-in-pde} and the recent
book \cite{Nakao-Plum-Watanabe:cap-for-pde-book} for a more specific
treatment of computer-assisted proofs in PDE. We also mention the work
of Tanaka \cite{Tanaka:aposteriori-sign-change-elliptic-pde} where he
also controls the nodal line of an elliptic problem (as opposed to an
eigenvalue problem in our case) using computer-assisted techniques,
and Dahne--Salvy \cite{Dahne-Salvy:enclosures-eigenvalues} enclosing
eigenvalues of the Laplacian on spherical triangles with techniques
very similar to those used here.

The paper is organized as follows. In Section
\ref{sec:finding-candidate} we present the candidate for the counterexample and discuss how it was found. In Section
\ref{sec:separating-first-four} we explain the separation of the first
four eigenvalues from the rest. Section
\ref{sec:constructing-approximation} gives details of how the
approximate eigenfunctions are constructed and in Section
\ref{sec:isolating-second} we make use of these approximations for
isolating the second eigenvalue. Finally in Section
\ref{sec:nodal-line} we prove that the nodal line of the second
eigenfunction is closed and conclude the proof of Theorem \ref{MainThm}. Section \ref{sec:details-of-implementation}
contains details about the implementation of the computer assisted
parts.

\section{Finding a candidate}
\label{sec:finding-candidate}

\begin{figure}
  \centering
  \begin{subfigure}[t]{0.45\textwidth}
    \includegraphics[width=\textwidth]{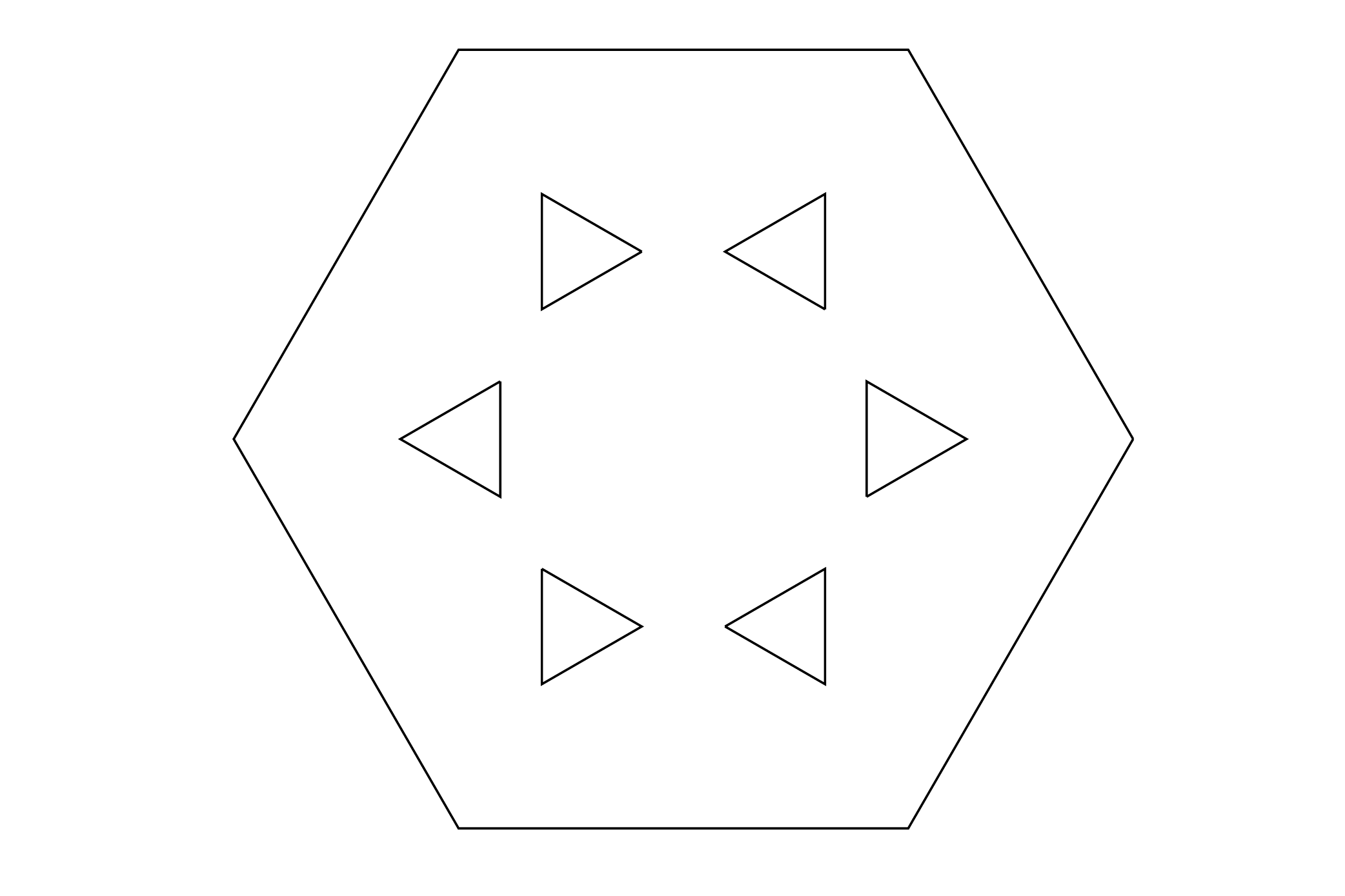}
    \caption{The candidate domain is a hexagon with side length \(1\)
      with holes that are equilateral triangles of height
      \(\frac{6}{27}\) with centers placed at a distance of
      \(\frac{11}{27}\) from the origin.}
    \label{fig:domain}
  \end{subfigure}
  \hspace{0.05\textwidth}
  \begin{subfigure}[t]{0.45\textwidth}
    \includegraphics[width=\textwidth]{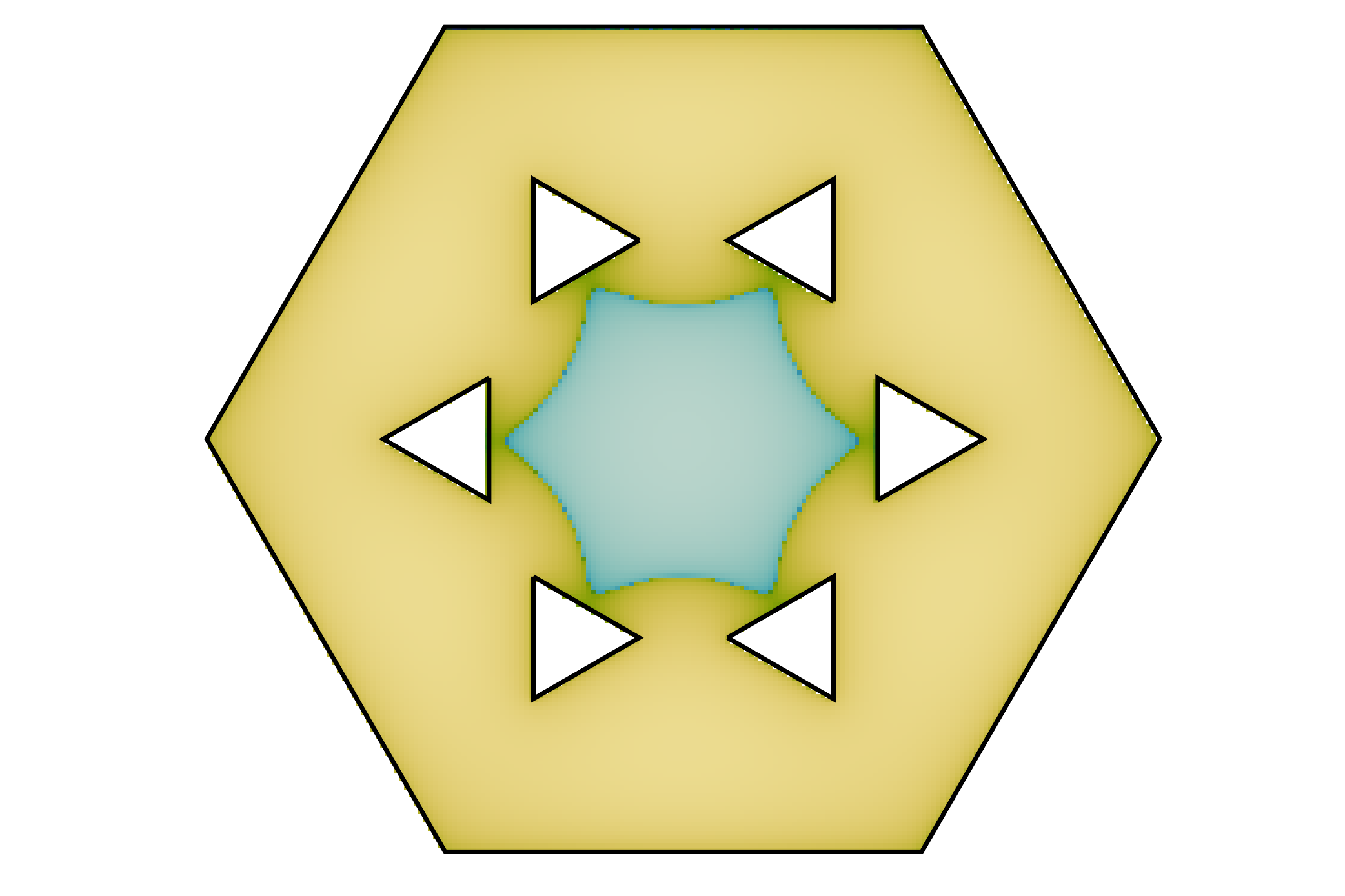}
    \caption{Plot highlighting the nodal line of \(\tilde{u}_{2}\),
      generated by plotting
      \(\sign(\tilde{u}_{2})\log(|\tilde{u}_{2}|)\).}
    \label{fig:nodal-line}
  \end{subfigure}
  \caption{The counterexample and an approximation of its nodal
    domains.}
\end{figure}

The main idea behind the choice of the domain was to start from
Hoffmann-Ostenhof--Hoffman-Ostenhof--Nadirashvili's construction from
a disk and carve as few (but possibly large) holes as possible. We
tried to work with domains as symmetric as possible and holes with few
sides to reduce the computational cost. However, due to the lower bound of Theorem \ref{thm:1} being restricted to polygonal
domains we chose the domain to be of polygonal shape. We found many
instances of domains for which the nodal line was closed, though the
problem seems to be quite sensitive to the position and shape of the
holes, and small perturbations destroy the closedness of the nodal
line due to the very small nature of the relevant numbers, see Figure
\ref{fig:candidates} for a few different candidates. In the end we
settled for a domain given by a hexagon with six holes in it. The side
length of the hexagon is normalized to 1 and the holes are equilateral
triangles of height \(\frac{6}{27}\) with their centers placed a
distance \(\frac{14}{27}\) from the origin, see Figure
\ref{fig:domain}. The search was done using the PDE Toolbox in Matlab
and the first five approximate eigenvalues for the final domain were
\begin{equation}
  \label{eq:7}
  \lambda_{1} = 31.0432,\
  \lambda_{2} = 63.2104,\
  \lambda_{3} = 63.7259,\
  \lambda_{4} = 63.7259,\
  \lambda_{5} = 68.2629.
\end{equation}

\begin{figure}
  \centering
  \includegraphics[width=\textwidth]{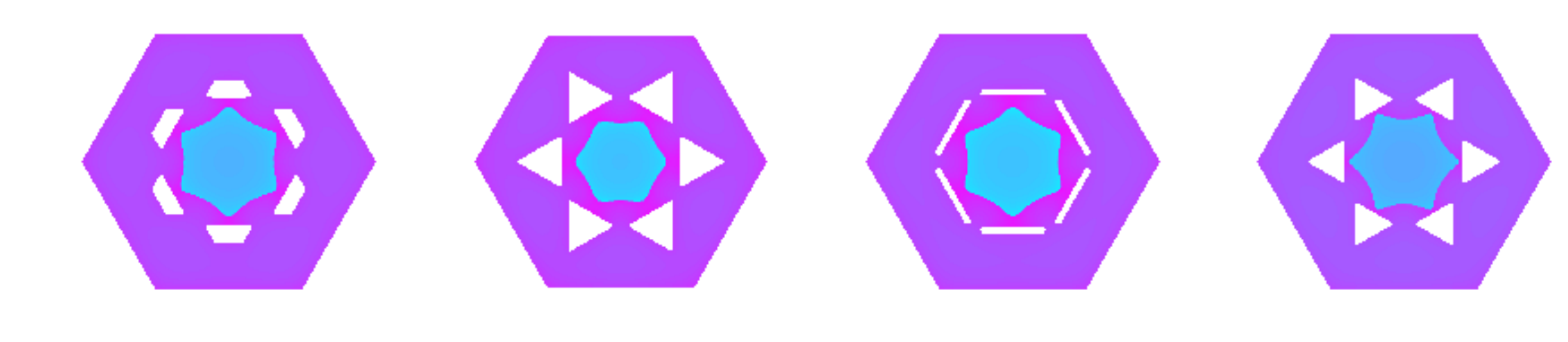}
  \caption{Four domains for which the nodal line seems to be closed.
    Similarly to Figure \ref{fig:nodal-line} we plot
    \(\sign(u)\log(|u|)\). The rightmost one is the one we finally
    chose and the only one for which we have proved that the nodal
    line indeed is closed.}
  \label{fig:candidates}
\end{figure}

\section{Separating the first four eigenvalues}
\label{sec:separating-first-four}
The first four eigenvalues will be separated from the rest by
computing a lower bound for the fifth eigenvalue. We make use of
recent results by \cite{Liu:framework-verified-eigenvalues}. The
procedure is exactly the same as in
\cite{GomezSerrano-Orriols:negative-hearing-shape-triangle}.

The starting point is a triangulation of the domain as given in Figure
\ref{fig:mesh}. It consists mostly of equilateral triangles except
next to the boundary of the holes where the triangles are cut in half
(these are colored red in the figure). The basis functions are indexed
by the interior edges of the triangulation: For an edge \(E\) between
two triangles \(\tau_{1}\) and \(\tau_{2}\) the basis function
\(\psi_{E}\) is the unique function supported on
\(\tau_{1} \cup \tau_{2}\) such that the restriction to each triangle
is affine, takes the value 1 at the midpoint of \(E\) and the value 0
at the midpoints of the other edges of \(\tau_{1}\) and \(\tau_{2}\).
In our case there are three kinds of edges, type 1 where \(\tau_{1}\)
and \(\tau_{2}\) are both equilateral (these correspond to the
majority of edges), type 2 where only one of them is equilateral and
type 3 where none of them are equilateral.

\begin{figure}
  \centering
  \begin{subfigure}[t]{0.45\textwidth}
    \includegraphics[width=\textwidth]{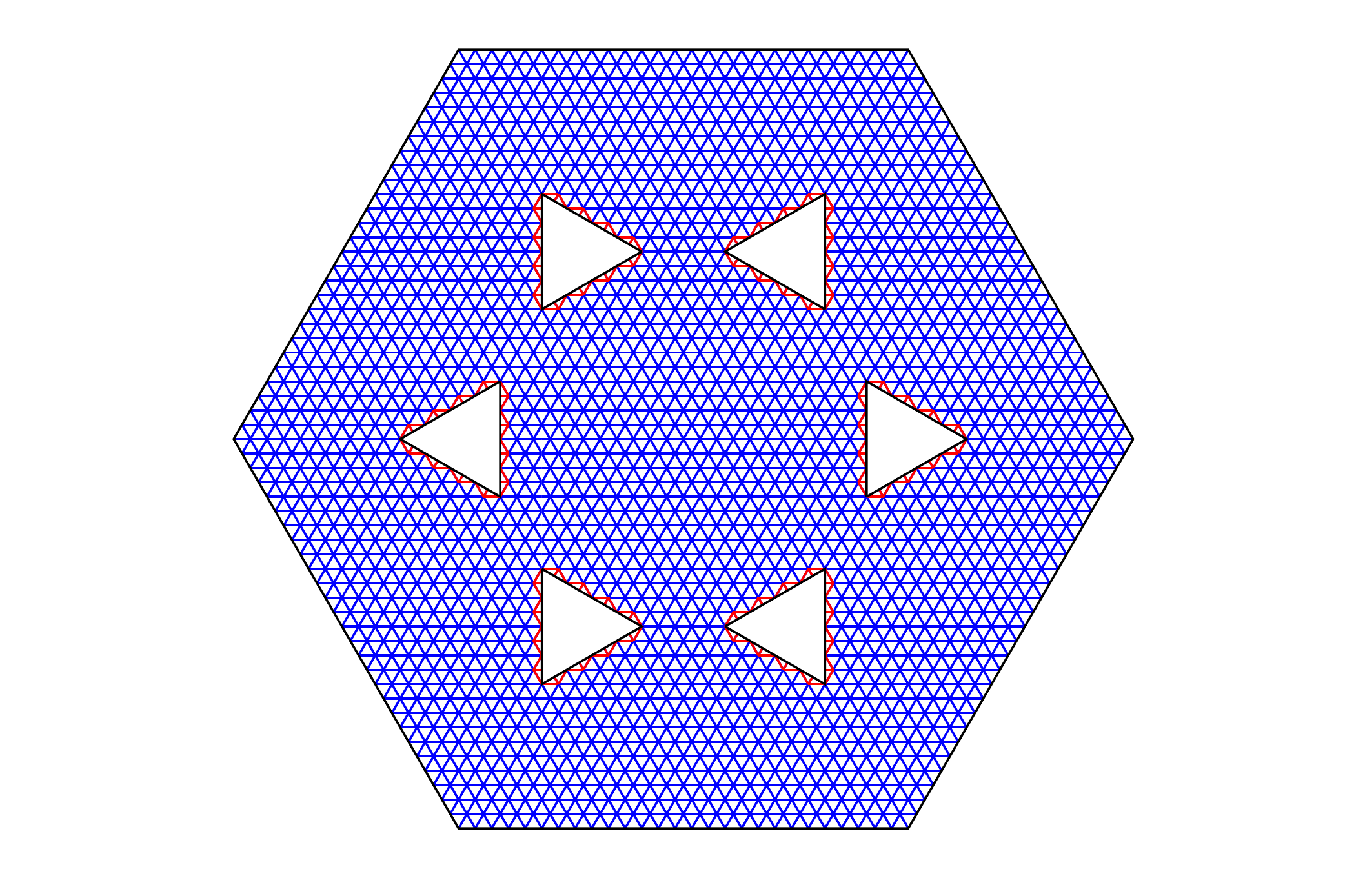}
    \caption{Triangulation of the domain.}
    \label{fig:mesh}
  \end{subfigure}
  \hspace{0.05\textwidth}
  \begin{subfigure}[t]{0.45\textwidth}
    \includegraphics[width=\textwidth]{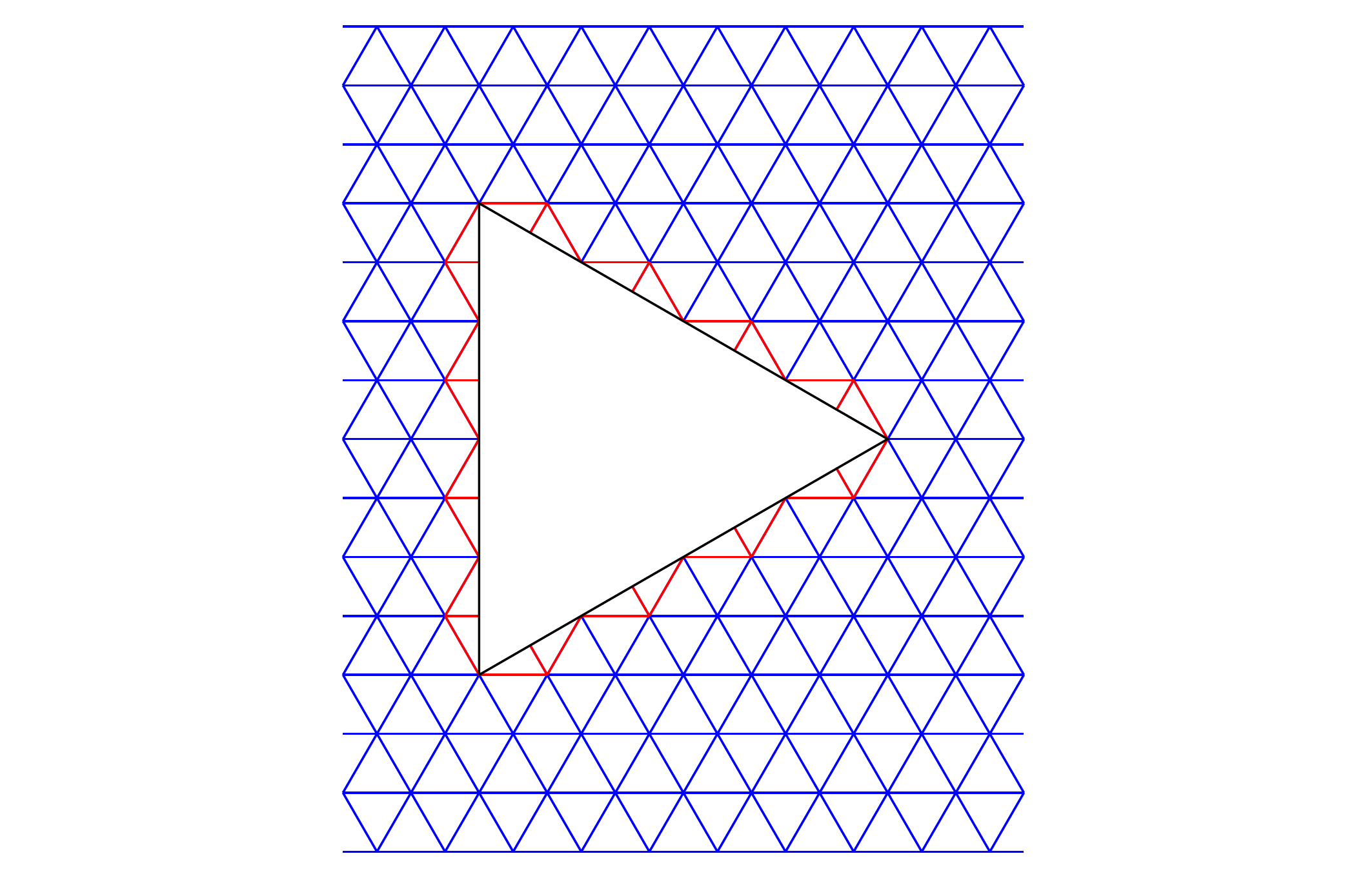}
    \caption{Triangulation of the domain around the holes.}
    \label{fig:mesh-zoomed}
  \end{subfigure}
  \caption{Meshing of the counterexample.}
\end{figure}

The weak formulation of the problem \eqref{laplacian-pde} reads
\begin{equation*}
  \int_{\Omega}\nabla u \cdot \nabla \psi = \lambda \int_{\Omega} u \psi.
\end{equation*}
We define the coefficients of the stiffness and mass matrices
\(A = (a_{EF}), B = (b_{EF})\) by
\begin{equation*}
  a_{EF} = \int_{\Omega}\nabla \psi_{E} \cdot \nabla \psi_{F}, \quad
  b_{EF} = \int_{\Omega}\psi_{E}\psi_{F}
\end{equation*}
which leave us with solving discrete system
\begin{equation}
  \label{eq:1}
  Ax = \lambda Bx.
\end{equation}
Here the vector \(x = (x_{E})\) corresponds to the discrete solution
\(u = \sum x_{E}\psi_{E}\). For our choice of triangulation the mass
matrix will be diagonal with coefficients
\begin{equation*}
  \frac{h^{2}}{2\sqrt{3}}, \quad
  \frac{3h^{2}}{8\sqrt{3}}, \quad
  \frac{h^{2}}{4\sqrt{3}}, \quad
\end{equation*}
for edges of type 1, 2 and 3 respectively. Here \(h\) is the side
length of the equilateral triangles. The matrix \(A\) is sparse and on
the diagonal the coefficients are
\begin{equation*}
  \frac{8}{\sqrt{3}}, \quad
  \frac{12}{\sqrt{3}}, \quad
  \frac{4}{\sqrt{3}}, \quad
\end{equation*}
for edges of type 1, 2 and 3 respectively. The off-diagonal entries
are zero if the two edges do not have a common triangle or
\(\frac{-2}{\sqrt{3}}\) if they do. Since \(B\) is diagonal it is
easily inverted, which allows us to reduce the problem to solving the
matrix eigenvalue problem
\begin{equation}
  \label{eq:2}
  Mx = \lambda x,\quad M = B^{-1}A.
\end{equation}

The solution of the finite element problem can then be linked to the
continuous problem using the following Theorem:
\begin{theorem}\cite[Theorem 2.1, Remark 2.2]{Liu:framework-verified-eigenvalues}
  \label{thm:1}
  Consider a polygonal domain \(\Omega\) with a triangulation so that
  each triangle has diameter at most \(h\). Let \(\lambda_{k}\) be the
  \(k\)-th solution of the eigenvalue problem \eqref{laplacian-pde} in
  \(\Omega\) and \(\lambda_{h,k}\) the k-th eigenvalue of the
  Crouzeix-Raviart discretized problem \eqref{eq:2} in \(\Omega\).
  Then
  \begin{equation}
    \label{eq:3}
    \frac{\lambda_{h,k}}{1 + C_{h}^{2}\lambda_{h,k}} \leq \lambda_{k},
  \end{equation}
  where \(C_{h} \leq 0.1893h\) is a constant.
\end{theorem}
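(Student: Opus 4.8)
This is the now-standard guaranteed lower bound for eigenvalues by the Crouzeix--Raviart nonconforming finite element method, and I would prove it following the approach of Liu and Carstensen--Gedicke: represent both $\lambda_k$ and $\lambda_{h,k}$ by Rayleigh quotients, bridge $H_0^1(\Omega)$ and the nonconforming space $V_h$ (spanned by the $\psi_E$) via the canonical Crouzeix--Raviart interpolation operator, and compare the two min--max problems. Write $a_h(v,w):=\sum_\tau\int_\tau\nabla v\cdot\nabla w$ for the broken Dirichlet form and $\nabla_h$ for the element-wise gradient, and let $I_h\colon H_0^1(\Omega)\to V_h$ be defined on each triangle by matching edge averages (equivalently, edge-midpoint values). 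The operator $I_h$ has two properties that do all the work. First, on each triangle $\tau$ the constant vector $\nabla(I_hv)|_\tau$ is the mean of $\nabla v$ over $\tau$; hence $\nabla_h(I_hv)$ is the $L^2$-orthogonal projection of $\nabla v$ onto piecewise-constant fields, so $\|\nabla_hI_hv\|\le\|\nabla v\|$, and --- since $\nabla_hw_h$ is itself piecewise constant --- the identity $a_h(I_hv,w_h)=\int_\Omega\nabla v\cdot\nabla_hw_h$ holds for all $w_h\in V_h$ with \emph{no error term}, which is the substitute for the Galerkin orthogonality that nonconformity otherwise destroys. Second, on each $\tau$ one has the Poincaré-type bound $\|v-I_\tau v\|_{L^2(\tau)}\le C(\tau)\|\nabla(v-I_\tau v)\|_{L^2(\tau)}\le C(\tau)\|\nabla v\|_{L^2(\tau)}$, where $C(\tau)$ is the reciprocal square root of the smallest positive eigenvalue of a Laplace--Neumann problem on $\tau$ among functions with vanishing edge averages; summing over the triangulation gives $\|v-I_hv\|_{L^2(\Omega)}\le C_h\|\nabla v\|_{L^2(\Omega)}$. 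The estimate $C_h\le 0.1893\,h$ amounts to the (nontrivial but by now established) fact that $C(\tau)\le 0.1893\operatorname{diam}(\tau)$ \emph{uniformly over all triangle shapes}, which I would take as an input rather than reprove; this robustness is exactly what allows the thin half-triangles along the holes in Figure \ref{fig:mesh-zoomed} without the constant degenerating.

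For the eigenvalue comparison I would pass to the solution operators $T\colon L^2(\Omega)\to H_0^1(\Omega)$ with $a(Tf,w)=(f,w)$ and $T_h\colon L^2(\Omega)\to V_h$ with $a_h(T_hf,w_h)=(f,w_h)$; both are compact, self-adjoint and positive, with eigenvalues $1/\lambda_k$ and $1/\lambda_{h,k}$, indexed decreasingly. The heart of the matter is the quadratic-form inequality
\begin{equation*}
  \langle Tf,f\rangle\ \le\ \langle T_hf,f\rangle + C_h^2\,\|f\|_{L^2}^2\qquad\text{for all } f\in L^2(\Omega),
\end{equation*}
which I would establish by setting $u=Tf$, $u_h=T_hf$, inserting $I_hu$, using the error-free identity and the interpolation bound above, and absorbing the resulting cross term with a Young-type inequality. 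Granting it, $T\preceq T_h+C_h^2\,\mathrm{Id}$ in the order of self-adjoint operators, so Courant--Fischer and monotonicity of eigenvalues give $1/\lambda_k\le 1/\lambda_{h,k}+C_h^2$, which rearranges to \eqref{eq:3}; the possible multiplicity of $\lambda_{h,k}$ is harmless since only the ordered spectrum enters.

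The step I expect to be the obstacle is the quadratic-form inequality with precisely the factor $C_h^2$, carried out \emph{without} any elliptic-regularity assumption on $u$ --- which matters because $V_h\not\subset H_0^1(\Omega)$, so the nonconforming consistency term cannot simply be discarded. A blunt comparison of Rayleigh quotients only yields the weaker $\lambda_k\ge\lambda_{h,k}/(1+C_h\sqrt{\lambda_{h,k}})^2$; getting the sharp denominator $1+C_h^2\lambda_{h,k}$ is exactly where the error-free identity for $I_h$ must be exploited carefully. The other, logically independent, difficulty is the shape-robust explicit value $0.1893$ of the interpolation constant.
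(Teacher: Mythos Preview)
The paper does not prove this theorem at all: it is quoted verbatim as an external result of Liu (with the explicit constant coming from Remark~2.2 of that reference) and is used as a black box to turn the verified lower bound on $\lambda_{h,5}$ into a lower bound on $\lambda_5$. There is therefore nothing to compare your argument against in this paper; what you have written is essentially a sketch of Liu's own proof (projection property of the Crouzeix--Raviart interpolant, the resulting consistency-error-free identity, the shape-robust interpolation constant, and a min--max/solution-operator comparison), and you have correctly flagged the two genuinely nontrivial ingredients---the sharp denominator $1+C_h^2\lambda_{h,k}$ rather than $(1+C_h\sqrt{\lambda_{h,k}})^2$, and the uniform bound $C(\tau)\le 0.1893\operatorname{diam}(\tau)$---as inputs you would take from the literature rather than reprove. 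For the purposes of this paper that is exactly the right stance: the theorem is invoked, not established.
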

Due to the monotonicity of \eqref{eq:3} this reduces the problem of
lower bounding the fifth eigenvalue of the domain: \(\lambda_{5}\), to
lower bounding the fifth eigenvalue of \eqref{eq:2}:
\(\lambda_{h,5}\).

To get a lower bound for \(\lambda_{h,5}\) we will follow the same
procedure as in
\cite{GomezSerrano-Orriols:negative-hearing-shape-triangle}, making
use of Gershgorin's generalized disks theorem
\cite{Gerschgorin:eigenvalues-theorem}. Before we can do that we need
the following lemma:
\begin{lemma}
  \cite[Lemma
  2.4]{GomezSerrano-Orriols:negative-hearing-shape-triangle}
  \label{lemma:2}
  Let \(v_{1},\dots,v_{m}\) be vectors in \(\mathbb{R}^{m}\) and
  \(s > 0\) such that
  \(|\langle v_{i}, v_{j}\rangle - \delta_{ij}| \leq s\) and suppose
  that \(8ms < 1\). Then there exists an orthonormal set of vectors
  \(w_{1},\dots,w_{m} \in \mathbb{R}^{m}\) such that
  \(\|v_{i} - w_{i}\| \leq \sqrt{3s}\).
\end{lemma}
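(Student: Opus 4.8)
The plan is to obtain the orthonormal family from the $v_i$ by a single symmetric orthogonalization, i.e.\ by multiplying by the inverse square root of the Gram matrix. First I would let $V$ be the $m\times m$ matrix with columns $v_1,\dots,v_m$, set $G=V^{\top}V$ and $E=G-I$. The hypothesis says that $E$ is symmetric with $|E_{ij}|=|\langle v_i,v_j\rangle-\delta_{ij}|\le s$ for all $i,j$; hence each absolute row sum of $E$ is at most $ms$, and, $E$ being symmetric, $\|E\|_{\mathrm{op}}\le ms<1/8<1$. Consequently every eigenvalue of $G$ lies in $[1-ms,1+ms]\subset(0,\infty)$, so $G$ is positive definite and $G^{1/2},G^{-1/2}$ are well-defined symmetric matrices. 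I would then set $W=VG^{-1/2}$ with columns $w_1,\dots,w_m$; since $W^{\top}W=G^{-1/2}GG^{-1/2}=I$, the $w_i$ form an orthonormal set in $\mathbb{R}^m$, so it only remains to estimate $\|v_i-w_i\|$.

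For that, writing $V=WG^{1/2}$ gives $v_i-w_i=W(G^{1/2}-I)e_i$, and orthonormality of the columns of $W$ yields
\[
  \|v_i-w_i\|^{2}=\bigl\|(G^{1/2}-I)e_i\bigr\|^{2}=\bigl((G^{1/2}-I)^{2}\bigr)_{ii}.
\]
The key point — and essentially the only place one must be careful — is that bounding the right-hand side by the operator norm $\|G^{1/2}-I\|_{\mathrm{op}}^{2}\le(1-\sqrt{1-ms})^{2}$ is far too lossy (it only gives $\|v_i-w_i\|\le ms$, which can exceed $\sqrt{3s}$ under the stated hypotheses, e.g.\ once $m^{2}s>3$). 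Instead I would keep track of the diagonal entry: since $G-I=(G^{1/2}-I)(G^{1/2}+I)$ with commuting factors, $E^{2}=(G^{1/2}-I)^{2}(G^{1/2}+I)^{2}$, and because $(G^{1/2}+I)^{2}\succeq I$ commutes with the positive semidefinite matrix $(G^{1/2}-I)^{2}$ one gets $(G^{1/2}-I)^{2}\preceq E^{2}$. Comparing $(i,i)$ entries and using the entrywise bound on $E$,
\[
  \|v_i-w_i\|^{2}\le (E^{2})_{ii}=\sum_{k=1}^{m}E_{ik}^{2}\le m s^{2}.
\]

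Finally, since $ms<1/8$ this gives $\|v_i-w_i\|^{2}\le(ms)\,s<\tfrac{1}{8}s<3s$, hence $\|v_i-w_i\|\le\sqrt{3s}$, as claimed. (In fact the argument only uses $ms<1$ and yields the much stronger $\|v_i-w_i\|<\sqrt{s/8}$, so the stated bound leaves a wide margin.) I expect the only genuine obstacle here to be conceptual rather than computational: recognizing that the naive operator-norm estimate on $G^{1/2}-I$ does not suffice and that one must exploit the entrywise smallness of the Gram error $E$ through $(E^{2})_{ii}\le m s^{2}$; everything else is routine spectral calculus for the commuting family of functions of $G$.
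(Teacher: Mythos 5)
Your proof is correct, and it is essentially the standard symmetric (L\"owdin) orthogonalization argument, which is also the route taken in the cited reference [G\'omez-Serrano--Orriols, Lemma 2.4]; note that the paper at hand does not reproduce a proof but simply cites that lemma. The set-up $W = VG^{-1/2}$, the reduction $\|v_i - w_i\|^2 = ((G^{1/2}-I)^2)_{ii}$ via orthogonality of $W$, and the invertibility of $G$ from $\|E\|_{\mathrm{op}}\le ms<1$ are all standard and match the reference's strategy. Where your write-up improves on the stated bound is the clean operator inequality $(G^{1/2}-I)^2 \preceq E^2$ (valid since $E^2-(G^{1/2}-I)^2 = (G^{1/2}-I)^2\bigl[(G^{1/2}+I)^2 - I\bigr]$ is a product of commuting PSD matrices), followed by the entrywise estimate $(E^2)_{ii} = \sum_k E_{ik}^2 \le m s^2$. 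This yields $\|v_i-w_i\| \le \sqrt{m}\,s < \sqrt{s/8}$, a strictly stronger conclusion than $\sqrt{3s}$, and as you observe it only needs $ms<1$ rather than $8ms<1$. The original proof obtains the weaker constant $\sqrt{3}$ because it bounds the correction term more crudely (via norm estimates on the Neumann-type expansion of $G^{-1/2}-I$ rather than the sharp diagonal-entry inequality), so your version is both correct and a genuine tightening. Your side remark that the naive bound $\|G^{1/2}-I\|_{\mathrm{op}}\le 1-\sqrt{1-ms}$ does not suffice once $m$ is large is also accurate and correctly identifies why an entrywise, rather than purely spectral, estimate is needed.
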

We will use it in the following way: we first find an approximate
orthonormal basis of eigenvectors \(\{v_{i}\}\) of \(M\), this is
easily done using standard eigenvalue routines. Let \(\tilde{Q}\) be
the matrix that has them as columns, and compute an enclosure for the
almost-diagonal matrix \(\tilde{D} = \tilde{Q}^{T}M\tilde{Q}\). Using
the lemma we can find an orthonormal matrix \(Q\) which is close to
\(\tilde{Q}\). Let \(D = Q^{T}MQ\), which is not necessarily diagonal
but has the same eigenvalues as \(M\) due to the orthogonality of
\(Q\). We can obtain rigorous enclosures of the entries of \(D\) using
\(\tilde{D}\) in the following way
\begin{align*}
  |D_{ij} - \tilde{D}_{ij}| &= |\langle w_{i}, Mw_{j} \rangle - \langle v_{i}, Mv_{j}\rangle|\\
                           &\leq |\langle w_{j} - v_{j}, Mv_{i}\rangle
                             + \langle w_{j} - v_{j}, M(w_{i} - v_{i})\rangle
                             + \langle w_{i} - v_{i}, Mv_{j} \rangle |\\
                           &\leq \sqrt{3s}(\|Mv_{i}\| + \|Mv_{j}\|) + 4s\|M\|_{2},
\end{align*}
with \(s\) is as in the lemma and using the symmetry of \(M\). Observe
that \(\|Mv_{i}\|\) can be computed explicitly and the upper bound
\(\|M\|_{2} \leq \|M\|_{\text{Frob}}\) is easily computed.

Finally applying Gershgorin's generalized disks theorem to the matrix
\(D\), of which we have sharp bounds, we can separate the spectrum of
\(M\) in two components, one of which will contain the first 4
eigenvalues and the other of which will contain the rest. The lower
bound we get for the second component is \(66.2862\) which is thus
also a lower bound for \(\lambda_{h,5}\). A direct application of
Theorem~\ref{thm:1} then gives us the lower bound \(66.0709\) for
\(\lambda_{5}\). Note that this is well above the approximate value
for \(\lambda_{4}\) computed in Section~\ref{sec:finding-candidate}.

\section{Constructing approximate solutions}
\label{sec:constructing-approximation}
In this section we explain how to compute approximations for the first
four eigenfunctions which we will then use to isolate the second
eigenfunction and analyse its nodal line. We will make use of the
Method of Particular Solutions (MPS) to the compute the
approximations. The version of MPS that we give here is due to Betcke
and Trefethen \cite{Betcke-Trefethen:method-particular-solutions,
  Betcke:generalized-svd-mps}. One starts writing the eigenfunction as
a linear combination of functions \(\phi_{i} \, (1 \leq i \leq N)\)
that satisfy the equation \(-\Delta\phi_{i} = \lambda\phi_{i}\) in the
domain but with no boundary conditions, where \(\lambda\) is taken as
a parameter and is part of the problem. The coefficients are then
chosen to approximate the boundary condition we want \(u\) to satisfy.
In the case of a zero Dirichlet boundary condition this amounts to
finding \(\lambda\) and a non-zero linear combination for which the
boundary values are as close to zero as possible. The linear
combination is determined by fixing \(\lambda\) and taking \(m_{b}\)
collocation points on the boundary,
\(\{x_{k}\}_{k = 1}^{m_{b}} \subset \partial \Omega\), then choosing
the linear combination to minimize its values on the collocation
points in the least squares sense. This alone is not quite enough,
increasing the number \(N\) of elements in the basis leads to
existence of linear combinations very close to 0 inside the domain
\(\Omega\). The version by Betcke and Trefethen handles this by also
adding a number \(m_{i}\) of interior points,
\(\{y_{l}\}_{l = 1}^{m_{i}} \subset \Omega\), and taking the linear
combination to stay close to unit norm on these. This is accomplished
by considering the two matrices \(A_{B} = (\phi_{i}(x_{k}))\) and
\(A_{I} = (\phi_{i}(y_{l}))\) which are combined into a matrix whose
\emph{QR} factorization gives an orthonormal basis of these function
evaluations
\begin{equation*}
  A = \begin{bmatrix} A_{B} \\ A_{I} \end{bmatrix} =
  \begin{bmatrix} Q_{B} \\ Q_{I} \end{bmatrix} R =: Q R.
\end{equation*}
The right singular vector \(v\) corresponding to the smallest singular
value \(\sigma = \sigma(\lambda)\) of \(Q_{B}\) for a given
\(\lambda\) is a good candidate for the eigenfunction when
\(\sigma(\lambda)\) is small. In our case we do not require very high
precision for the first, third and fourth eigenvalue and the
approximate values from \eqref{eq:7} gives good enough approximate
eigenfunctions. The second eigenfunction needs slightly higher
precision and this approximate value is not good enough. Instead we
search for a \(\lambda\) around it which minimizes \(\sigma(\lambda)\)
using Brent's method. This minimizer is the value given in Table
\ref{table:finding-approximations}.

We will use three types of basis functions \(\phi_{i}\), all of them
products of Bessel functions and trigonometric functions. They are all
given in polar coordinates centered around a certain point (which may
be different from basis function to basis function). The first type is
the one used in the original version of MPS, it is centered around a
vertex of the domain. If the angle of the vertex is \(\pi/\alpha\)
they take the form
\begin{equation}
  \label{eq:4}
  \phi_{\alpha,k} = J_{\alpha k}(\sqrt{\lambda}r)\sin \alpha k\theta
\end{equation}
in polar coordinates centered around the vertex and \(\theta = 0\)
taken along one of the boundary segments. Unless \(\alpha\) is an
integer these functions have a branch cut in \(\theta\) and are
therefore only suitable for vertices where the branch cut can be
placed outside of the domain, in particular they are not suited for
placement at the vertices of the holes of our domain. The two other
types of functions were recently introduced by Gopal and Trefethen
\cite{Gopal-Trefethen:laplace-solver-detailed}. One is centered around
points (called \textit{charges}) that accumulate root-exponentially
near a vertex of the domain and take the form
\begin{equation}
  \label{eq:5}
  \phi_\text{ext}(r, \theta) = Y_0\left(r \sqrt{\lambda}\right), \qquad
  \phi_\text{ext}^\text{c}(r, \theta) = Y_1\left(r \sqrt{\lambda}\right) \cos \theta, \qquad
  \phi_\text{ext}^\text{s}(r, \theta) = Y_1\left(r \sqrt{\lambda}\right) \sin \theta.
\end{equation}
In this case \(\theta = 0\) is taken along the bisector of the vertex.
The final type is an interior expansion which in our case is placed at
the center of the domain with \(\theta = 0\) taken along the positive
\(x\)-axis
\begin{equation}
  \label{eq:6}
  \phi_0(r, \theta) = J_0\left(r \sqrt{\lambda}\right), \qquad
  \phi_j^\text{c}(r, \theta) = J_j\left(r \sqrt{\lambda}\right) \cos j\theta, \qquad
  \phi_j^\text{s}(r, \theta) = J_j\left(r \sqrt{\lambda}\right) \sin j\theta.
\end{equation}
Here \(J\) and \(Y\) are the Bessel functions of the first and second
kind respectively.

The basis functions will be used in the following way:
\begin{enumerate}
\item At each vertex of the hexagon we place \(2N_{1}\) basis
  functions of the first type \eqref{eq:4}, \(\phi_{\alpha,k}\)
  (\(1 \leq k \leq 2N_{1}\)). The interior angle in this case is
  \(2\pi/3\) so \(\alpha = 3/2\). This gives \(2N_{1}\) free
  coefficients for each vertex for a total of \(12N_{1}\) free
  coefficients.
\item At each vertex of the holes we put \(N_{2}\) charges and basis
  functions of the second kind \eqref{eq:5} around those. This gives
  \(3N_{2}\) free coefficients for each vertex for a total of
  \(54N_{2}\) charges counting all the vertices.
\item In the center of the domain we place basis functions of the
  third type \eqref{eq:6} with \(0 \leq j \leq 6N_{3} - 1\), for a
  total of \(12N_{3} - 1\) free coefficients (\(j = 0\) giving only one
  free coefficient).
\end{enumerate}
In the computations we fix \(n\) and let \(N_{1} = N_{3} = n\) and
\(N_{2} = 3n\). The number of free coefficients is then
\(12n + 54\cdot 3n + 12n - 1 = 186n - 1\). See Figure
\ref{fig:placement} for a schematic of the placement of the basis
functions and charges.

\begin{figure}
  \centering
  \includegraphics[width=0.4\textwidth]{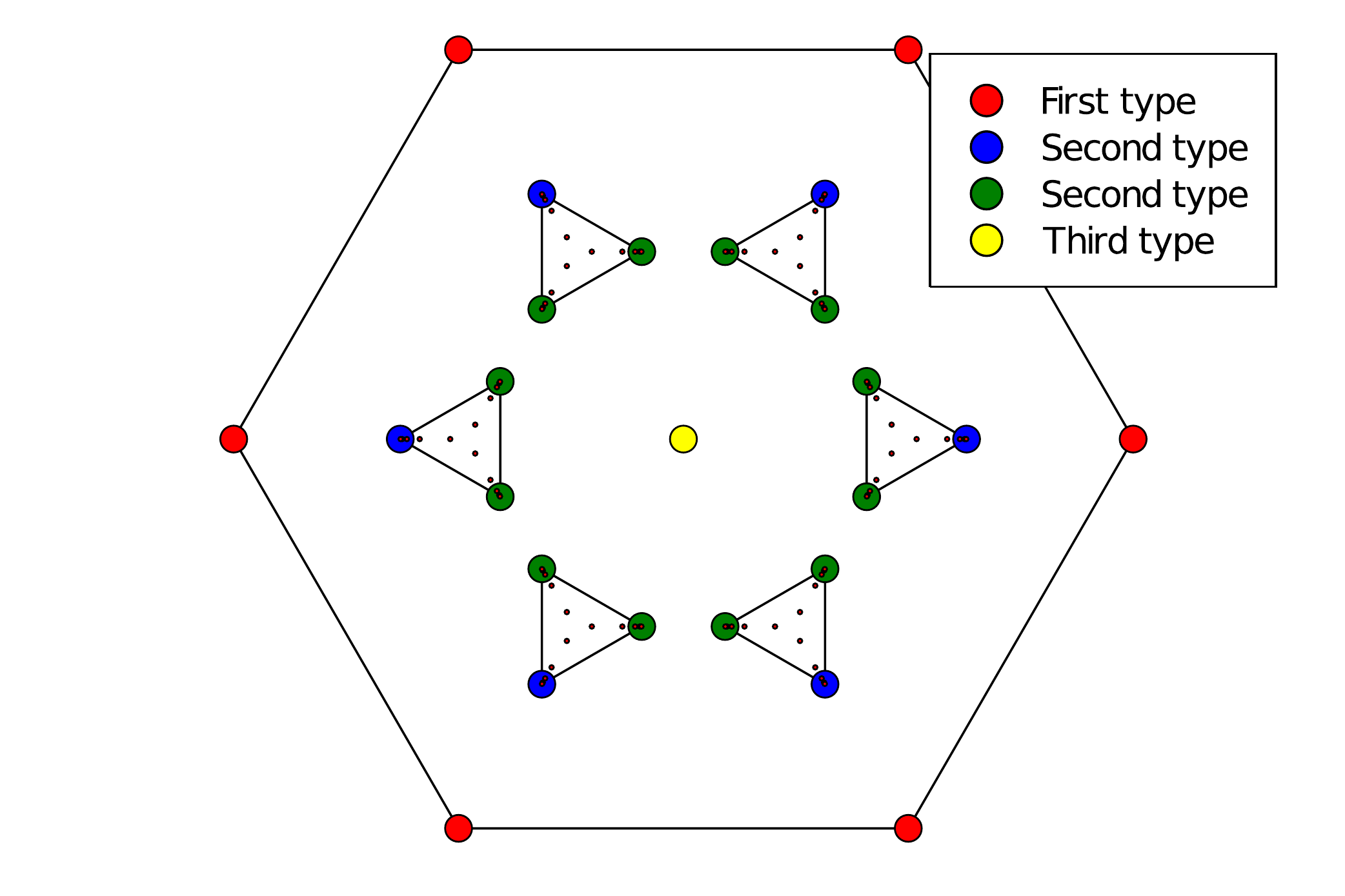}
  \caption{Schematic of placement of charges. The small dots indicate
    placement of charges for the basis functions of the second type,
    only a few charges are shown here. For the first and second
    eigenfunction the expansions with the same color can be taken to
    have the same coefficients due to symmetry. For the third and
    fourth eigenfunction the symmetries are different.}
  \label{fig:placement}
\end{figure}

We can reduce the number of free coefficients substantially by making
use of the symmetries of the domain and the eigenfunctions. We expect
the first and second eigenfunction to have 6-fold symmetry and also
be even with respect to the \(x\)-axis. This allows us to reduce the
number of free coefficients by both fixing some of the expansions to
have the same coefficients and by avoiding the use of some terms in
the expansions which do not satisfy the required symmetry.
\begin{enumerate}
\item For the expansions at the vertices of the hexagon we require
  that they all have the same coefficients, reducing the number of
  free coefficients to \(2N_{1}\) instead of \(12N_{1}\). Furthermore,
  due to their evenness it is enough to consider only even values of
  \(k\), further reducing the number of free coefficients to
  \(N_{1}\).
\item For the expansions at the vertices of the holes the 6-fold
  symmetry allows us to reduce the number of free coefficients to
  \(9N_{2}\). We can reduce it further by using that it has to be
  even. For the expansion at the outer vertex of each hole we only
  have to consider \(\phi_\text{ext}(r, \theta)\) and
  \(\phi_\text{ext}^\text{c}(r, \theta)\), so the number of
  coefficients for that vertex is reduced to \(2N_{2}\). Finally we
  can notice that the expansions at the two inner vertices of the
  holes are symmetric due to the evenness, so must also have the same
  coefficients. This reduces the total number of free coefficients to
  \(5N_{2}\).
\item The inner expansion should be even and have 6-fold symmetry.
  This means we can skip the use of \(\phi_j^\text{s}(r, \theta)\) and
  only have to consider \(j = 6i\) with \(0 \leq i \leq N_{3} - 1\).
  Reducing the number of free coefficients to \(N_{3}\).
\end{enumerate}
This reduces the number of free coefficients from \(184n\) to \(17n\),
more than a 10x improvement.

We expect that the third eigenvalue is even with respect to the
\(x\)-axis and odd with respect to the \(y\)-axis, whereas the fourth
one will satisfy the opposite symmetries. Using these symmetries we
take \(45n\) free coefficients in the first case and \(42n\) in the
second case.

The amount of accuracy we need is different for the different
eigenfunctions. For the first eigenfunction we only need enough
accuracy to separate \(\lambda_{1}\) from \(\lambda_{2}\), and since
they are far away from each other very low accuracy is enough. For the
third and fourth eigenfunction we need slightly higher accuracy since
\(\lambda_{3}\) and \(\lambda_{4}\) are much closer to
\(\lambda_{2}\), we also need slightly higher accuracy to be able to
handle the fact that \(\lambda_{3}\) and \(\lambda_{4}\) presumably
correspond to a double eigenvalue. The second eigenfunction is the
most challenging one: we need much higher accuracy to be able to
isolate the nodal line. Details of the computations are given in Table
\ref{table:finding-approximations}.

\begin{table}
  \centering
  \begin{tabular}{|c|c|c|c|c|}
    \hline
    Eigenfunction & Eigenvalue & \(n\) & Free coefficients & Collocation points \\ \hline
    \(\tilde{u}_{1}\) & \(\tilde{\lambda}_{1} = 31.0432\)& 1 & 17 & 51 \\ \hline
    \(\tilde{u}_{2}\) & \(\tilde{\lambda}_{2} = 63.20833598626884\)& 28 & 476 & 7616 \\ \hline
    \(\tilde{u}_{3}\) & \(\tilde{\lambda}_{3} = 63.7259\)& 6 & 270 & 2160 \\ \hline
    \(\tilde{u}_{4}\) & \(\tilde{\lambda}_{4} = 63.7259\)& 6 & 252 & 2016 \\ \hline
  \end{tabular}
  \caption{Details about computation of the approximate eigenpairs.}
  \label{table:finding-approximations}
\end{table}

We finish this section with four approximate eigenpairs
\((\tilde{\lambda}_{1}, \tilde{u}_{1})\),
\((\tilde{\lambda}_{2}, \tilde{u}_{2})\),
\((\tilde{\lambda}_{3}, \tilde{u}_{3})\) and
\((\tilde{\lambda}_{4}, \tilde{u}_{4})\), as detailed in Table
\ref{table:finding-approximations} and Figure
\ref{fig:approximate-eigenfunctions}. In the next two sections we will
use these approximations to isolate the second eigenvalue and prove
that the nodal line of the second eigenfunction is closed.

\begin{figure}
  \centering
  \includegraphics[width=0.8\textwidth]{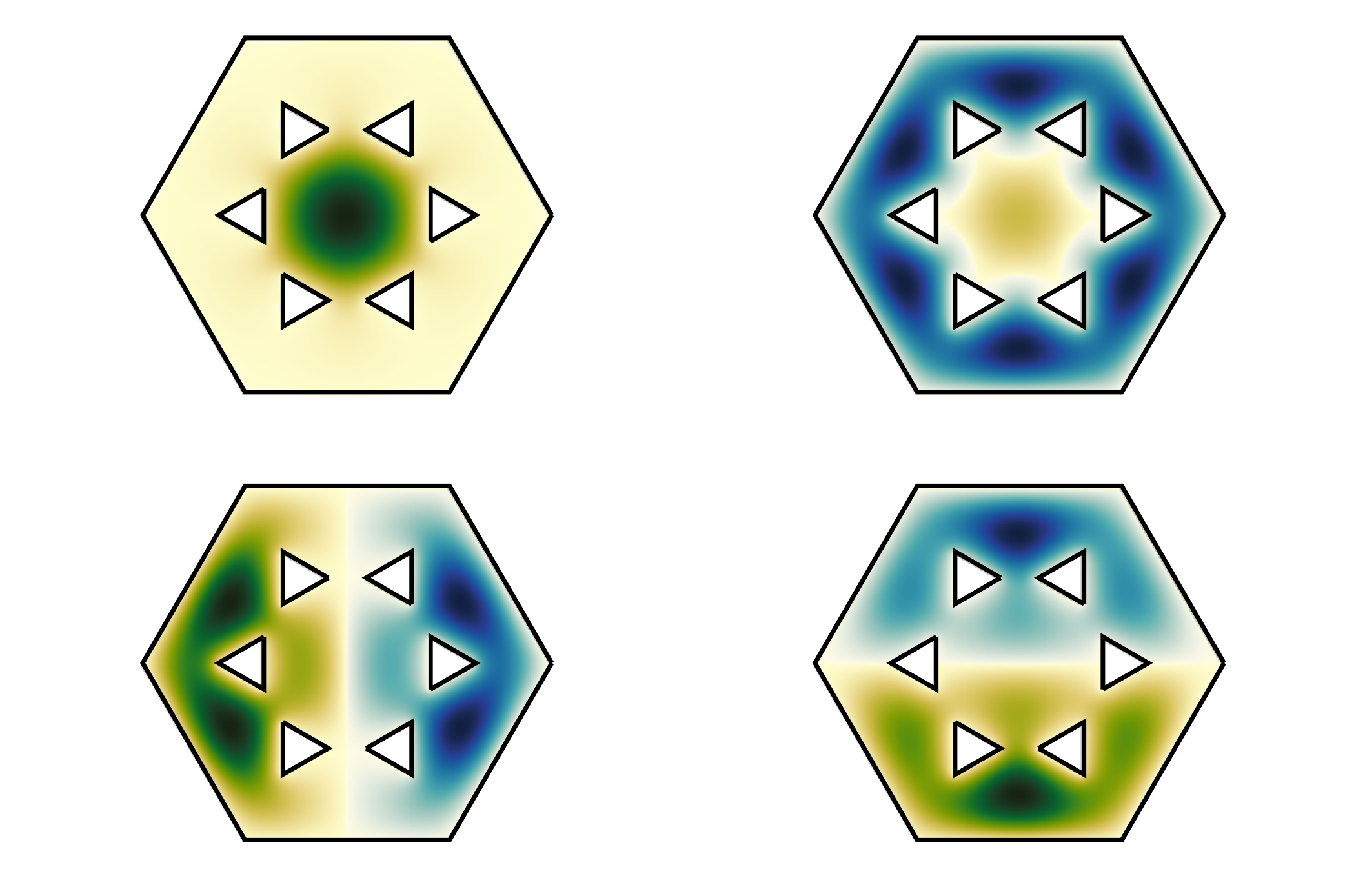}
  \caption{The four approximate eigenfunctions, starting with
    \(\tilde{u}_{1}\) in the top left and ending with
    \(\tilde{u}_{4}\) in the bottom right.}
  \label{fig:approximate-eigenfunctions}
\end{figure}

\section{Isolating the second eigenvalue}
\label{sec:isolating-second}
The main tool in this section is the following theorem by Fox, Henrici
and Moler from the original MPS article
\begin{theorem}
  \label{thm:FoxHenriciMoler}
  \cite{Fox-Henrici-Moler:approximations-bounds-eigenvalues,
    Moler-Payne:bounds-eigenvalues} Let \(\Omega\subset\mathbb{R}^n\)
  be bounded. Let \(\tilde{\lambda}\) and \(\tilde{u}\) be an
  approximate eigenvalue and eigenfunction---that is, they satisfy
  \(\Delta \tilde{u}+\tilde{\lambda} \tilde{u}=0\) in \(\Omega\) but
  not necessarily \(\tilde{u} = 0\) on~\(\partial\Omega\). Define
  \begin{equation}\label{mu:bound}
    \mu = \frac{\sqrt{|\Omega|}\sup_{x \in \partial \Omega}|\tilde{u}(x)|}{\|\tilde{u}\|_2}.
  \end{equation}
  where \(|\Omega|\) is the area of the domain. Then there exists an
  eigenvalue \(\lambda\) such that
  \begin{equation}
    \label{eq:eigenvalue-bound}
    \frac{|\tilde{\lambda} - \lambda|}{\lambda} \leq \mu.
  \end{equation}
\end{theorem}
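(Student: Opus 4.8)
The plan is to follow the classical Fox--Henrici--Moler argument, which reduces the whole statement to expanding the residual in an eigenbasis and controlling it with the boundary data. First I would let $\{\varphi_j\}_{j\geq 1}$ be an $L^2(\Omega)$-orthonormal basis of Dirichlet eigenfunctions with eigenvalues $\{\lambda_j\}$, and set $f := \Delta\tilde u + \tilde\lambda\,\tilde u$. By hypothesis $f \equiv 0$ in $\Omega$, so the point is instead to work with $w := (-\Delta - \tilde\lambda)^{-1}\tilde u$, or equivalently to expand $\tilde u = \sum_j c_j \varphi_j$ and study where the mass of the coefficients sits. Writing $c_j = \langle \tilde u, \varphi_j\rangle$, one has $\|\tilde u\|_2^2 = \sum_j c_j^2$, and the key identity comes from Green's formula: for each $j$,
\begin{equation*}
  (\lambda_j - \tilde\lambda)\,c_j
  = \int_\Omega \bigl((-\Delta)\tilde u - \tilde\lambda\,\tilde u\bigr)\varphi_j
    \;-\; \int_{\partial\Omega} \tilde u\,\partial_\nu\varphi_j
  \;=\; -\int_{\partial\Omega}\tilde u\,\partial_\nu\varphi_j ,
\end{equation*}
using $-\Delta\tilde u = \tilde\lambda\tilde u$ in $\Omega$ and $\varphi_j = 0$ on $\partial\Omega$.

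Next I would turn this into the quantitative bound \eqref{eq:eigenvalue-bound}. Suppose, for contradiction, that every eigenvalue satisfies $|\tilde\lambda - \lambda_j|/\lambda_j > \mu$, i.e. $|\lambda_j - \tilde\lambda| > \mu\,\lambda_j$. A cleaner route that avoids estimating $\partial_\nu\varphi_j$ directly is the Moler--Payne variant: consider $v$ the (weak) solution of $-\Delta v - \tilde\lambda v = 0$ in $\Omega$ with $v = \tilde u$ on $\partial\Omega$ — wait, that is just $\tilde u$ itself. So instead I would use the standard trick of introducing $g := \tilde u$ restricted to the boundary and the harmonic-type extension; concretely one shows
\begin{equation*}
  \Bigl\| \tilde u \Bigr\|_2
  \;\le\; \frac{1}{\min_j |1 - \tilde\lambda/\lambda_j|}\cdot \bigl\| (\text{boundary contribution}) \bigr\|,
\end{equation*}
and the boundary contribution is controlled by $\sqrt{|\Omega|}\,\sup_{\partial\Omega}|\tilde u|$ after an application of Cauchy--Schwarz and the fact that the Dirichlet-to-Neumann pairing against the eigenbasis is an isometry-type map. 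Expanding $\tilde u = \sum c_j\varphi_j$ and using the identity above, $\sum_j (\lambda_j - \tilde\lambda)^2 c_j^2 / \lambda_j^2 = \sum_j c_j^2 \bigl(1 - \tilde\lambda/\lambda_j\bigr)^2$ is bounded below by $\mu^2 \sum_j c_j^2 = \mu^2\|\tilde u\|_2^2$ under the contradiction hypothesis, while simultaneously it is bounded above by $\mu^2\|\tilde u\|_2^2$ coming from the boundary term — forcing equality to fail unless the coefficients vanish, a contradiction with $\tilde u \not\equiv 0$.

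The main obstacle, and the part deserving the most care, is making the boundary term estimate rigorous: one must pass from $\int_{\partial\Omega}\tilde u\,\partial_\nu\varphi_j$ to something controlled purely by $\sup_{\partial\Omega}|\tilde u|$ and $|\Omega|$, without any appeal to $\|\partial_\nu\varphi_j\|$, which individually is not uniformly bounded. The standard resolution is to recognize the sum $\sum_j (\lambda_j-\tilde\lambda)^{-1}\bigl(\int_{\partial\Omega}\tilde u\,\partial_\nu\varphi_j\bigr)\varphi_j$ as (a multiple of) the $L^2$-projection of $\tilde u$ itself — since $\tilde u$ solves the homogeneous equation in $\Omega$, its own eigen-expansion coefficients $c_j$ must coincide with $-(\lambda_j - \tilde\lambda)^{-1}\int_{\partial\Omega}\tilde u\,\partial_\nu\varphi_j$, which is exactly the Green identity above. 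Then one only needs the elementary inequality $\bigl|\sum_j c_j^2\bigr| \le \bigl(\sup_{\partial\Omega}|\tilde u|\bigr)\cdot\bigl|\int_{\partial\Omega}\bigl(\sum_j c_j \partial_\nu\varphi_j\bigr)\bigr| \cdot (\text{normalization})$, together with $\bigl|\int_{\partial\Omega}1\bigr|$-type bounds replaced by an area factor via the trace/divergence theorem, yielding the factor $\sqrt{|\Omega|}$. Once that bookkeeping is in place, dividing through by $\|\tilde u\|_2^2$ and taking the worst $j$ gives $\min_j |\tilde\lambda - \lambda_j|/\lambda_j \le \mu$, which is \eqref{eq:eigenvalue-bound}. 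I would cite Moler--Payne for the precise trace-inequality step rather than reproving it, since that is where the constant $\sqrt{|\Omega|}$ (as opposed to a domain-dependent Sobolev constant) is extracted.
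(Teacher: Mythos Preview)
The paper does not prove this theorem at all: it is quoted verbatim from Fox--Henrici--Moler and Moler--Payne and used as a black box, so there is no ``paper's own proof'' to match. That said, your proposal has a genuine gap at exactly the step you flag as the main obstacle. You correctly derive the Green identity $(\lambda_j - \tilde\lambda)c_j = -\int_{\partial\Omega}\tilde u\,\partial_\nu\varphi_j$, but then you never succeed in bounding the right-hand side; the paragraph beginning ``The main obstacle'' is hand-waving (there is no ``isometry-type map'' here, and the sketched inequality with a ``$\bigl|\int_{\partial\Omega}1\bigr|$-type bound replaced by an area factor'' does not actually produce $\sqrt{|\Omega|}$), and you end by saying you would cite Moler--Payne for this step --- which is precisely the content of the theorem you are trying to prove.

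The missing idea, which is the whole point of the Moler--Payne improvement, is to introduce the \emph{harmonic} function $w$ with $\Delta w = 0$ in $\Omega$ and $w = \tilde u$ on $\partial\Omega$. The maximum principle gives $|w|\le \sup_{\partial\Omega}|\tilde u|$ pointwise, hence $\|w\|_2 \le \sqrt{|\Omega|}\,\sup_{\partial\Omega}|\tilde u|$ --- this is where the $\sqrt{|\Omega|}$ comes from, with no trace inequality needed. Since $\tilde u - w$ vanishes on $\partial\Omega$, expanding $\tilde u - w = \sum_j a_j\varphi_j$ and using $-\Delta(\tilde u - w) = \tilde\lambda\tilde u$ yields $\lambda_j a_j = \tilde\lambda c_j$, whence the $L^2$-expansion coefficients of $w$ are $b_j = c_j - a_j = a_j(\lambda_j - \tilde\lambda)/\tilde\lambda$. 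Then
\[
  \frac{\|w\|_2^2}{\|\tilde u\|_2^2}
  = \frac{\sum_j a_j^2(\lambda_j - \tilde\lambda)^2}{\sum_j a_j^2\lambda_j^2}
  \ge \min_j \frac{(\lambda_j - \tilde\lambda)^2}{\lambda_j^2},
\]
which gives \eqref{eq:eigenvalue-bound} immediately. Your eigenfunction-expansion framework is fine; what is missing is this harmonic-extension trick, which replaces the uncontrollable normal derivatives $\partial_\nu\varphi_j$ by a single function governed by the maximum principle.
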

This allows us to calculate enclosures of the approximate eigenvalues
computed in the previous section. To do that we need to find an
upper bound for \(\mu\) in \eqref{mu:bound}, which in turn means we need an
upper bound for the maximum on the boundary and a lower bound for the
\(L^{2}\) norm of the approximate eigenfunctions.

The upper bound for the maximum is computed using a combination of
interval arithmetic and Taylor expansions. The sides are divided into
small segments and on the midpoint of each segment a Taylor polynomial
of the approximate eigenfunction is computed. The maximum of the
polynomial is then bounded using classical interval arithmetic and the
error term for the polynomial is added. For more details see
\cite[Section 2.1]{Dahne-Salvy:enclosures-eigenvalues}. Thanks to the
symmetries satisfied by the approximate eigenfunctions we only have to
check the maximum on parts of the boundary.

For lower bounding the norm we use the same procedure as in
\cite{GomezSerrano-Orriols:negative-hearing-shape-triangle} and
\cite{Dahne-Salvy:enclosures-eigenvalues}. Consider a subset of the
domain, \(\Omega' \subset \Omega\). If \(u\) does not vanish in
\(\Omega'\) then without loss of generality it can be assumed to be
positive there and then, since \(-\Delta u = \lambda u > 0\), \(u\) is
superharmonic in \(\Omega'\) and satisfies
\(\inf_{\Omega'} u \geq \inf_{\partial\Omega'} u\). Thus a lower bound
for \(|u|\) on \(\partial\Omega'\) yields a lower bound for \(|u|\)
inside \(\Omega'\). To determine that \(u\) does not vanish on
\(\Omega'\) a lower bound for \(|u|\) on \(\partial\Omega'\) is
computed with the same techniques as when upper bounding the maximum.
Once it is determined
that \(u\) has a fixed sign on \(\partial\Omega'\) (which we assume to
be positive) the key observation is that
\(u\) cannot be negative inside \(\Omega'\) if \(\Omega'\) is small
enough. Indeed, if \(\Omega'' \subset \Omega'\) is a maximal domain
where \(u < 0\), then \(u = 0\) on \(\partial\Omega''\) and thus
\(\lambda\) is an eigenvalue for \(\Omega''\). If the area of
\(\Omega'\) is small enough this scenario can be ruled out using the Faber-Krahn
inequality. 
Details about the
choice of \(\Omega'\) for the different eigenfunctions are given in
Section \ref{sec:details-of-implementation}.

Upper bounds for \(\mu\) together with upper bounds for the maximum on
the boundary and enclosures of the eigenvalues are given in Table
\ref{table:mu-bound}. From Section \ref{sec:separating-first-four} we
know that there are at most four eigenvalues below \(66.0709\). If all
of the four enclosures were isolated we could have concluded that we
had isolated all four eigenvalues and we would know their indices in
the spectrum. However the enclosures coming from \(\tilde{u}_{3}\) and
\(\tilde{u}_{4}\) overlap and we have to handle that.

\begin{table}
  \centering
  \begin{tabular}{|c|c|c|c|}
    \hline
    Eigenfunction & \(\mu \leq \) & \(\sup_{x \in \partial\Omega} \leq\) & Enclosure for eigenvalue \\ \hline
    \(\tilde{u}_{1}\) & 0.14 & 0.012 & \([30 \pm 6.1]\) \\ \hline
    \(\tilde{u}_{2}\) & \(8.26\cdot 10^{-5}\) & \(1.01\cdot 10^{-6}\) & \([63.21 \pm 6.89\cdot 10^{-3}]\) \\ \hline
    \(\tilde{u}_{3}\) & 0.00186 & \(2.68\cdot 10^{-5}\) & \([64 \pm 0.393]\) \\ \hline
    \(\tilde{u}_{4}\) & 0.00215 & \(3.1\cdot 10^{-5}\) & \([64 \pm 0.411]\) \\ \hline
  \end{tabular}
  \caption{Upper bounds of \(\mu\) and the value on the boundary of
    the four approximate eigenfunctions together with computed
    enclosures for the corresponding eigenvalue.}
  \label{table:mu-bound}
\end{table}

\subsection{Handling the 2-cluster}
\label{sec:handling-double-eigenvalue}
Since the enclosures coming from \(\tilde{\lambda}_{3}\) and
\(\tilde{\lambda}_{4}\) overlap we cannot be sure that they indeed
correspond to two different eigenvalues. From the plots of the
corresponding eigenfunctions in Figure
\ref{fig:approximate-eigenfunctions} it does indeed seem like they do
correspond to different eigenvalues but what we will prove is a
slightly weaker statement which is enough for what we want to do. We
will prove that there are at least two eigenvalues in an interval
slightly larger than the two enclosures.

The proof will be based on the fact that \(\tilde{u}_{3}\) and
\(\tilde{u}_{4}\) are not proportional to each other so as long as the
error bounds for them are sufficiently small this would imply that the corresponding exact solutions can not possibly correspond to the same eigenfunction. The error bounds we will use are given by the following
theorem:
\begin{theorem}
  \cite[Theorem 3]{Moler-Payne:bounds-eigenvalues}
  \label{thm:L-inifinity-bounds}
  Let \(\Omega\subset\mathbb{R}^2\) be bounded. Let
  \(\tilde{\lambda}\) and \(\tilde{u}\) be an approximate eigenvalue
  and eigenfunction---that is, they satisfy
  \(\Delta \tilde{u}+\tilde{\lambda} \tilde{u}=0\) in \(\Omega\) but
  not necessarily \(\tilde{u} = 0\) on~\(\partial\Omega\). Let
  \begin{equation*}
    \mu = \frac{\sqrt{|\Omega|}\sup_{x \in \partial \Omega}|\tilde{u}(x)|}{\|\tilde{u}\|_2}.
  \end{equation*}
  Then there is an eigenvalue \(\lambda_{k}\) satisfying the same
  bounds as in Theorem \ref{thm:FoxHenriciMoler} and a corresponding
  eigenfunction \(u_{k}\). Let
  \begin{equation*}
    g(x) = \left(\int_{\Omega}G(x, y)^{2}dy\right)^{1/2}
  \end{equation*}
  where \(G(x, y)\) is the Green's function and
  \begin{equation*}
    \alpha = \min_{\lambda_{n} \not= \lambda_{k}} \frac{|\lambda_{n} - \tilde{\lambda}|}{|\lambda_{n}|}.
  \end{equation*}
  Then for any \(x \in \Omega\) we have the following bound for
  \(\tilde{u}\)
  \begin{equation*}
    |\tilde{u}(x) - u_{k}(x)| \leq
    \left(\sup_{x \in \partial \Omega}|\tilde{u}(x)|\right)\left(1 + g(x)\tilde{\lambda}\left(
      \frac{1}{1 - \mu} + \frac{1}{\alpha}\left(1 + \frac{\mu^{2}}{\alpha^{2}}\right)
    \right)\right).
  \end{equation*}
\end{theorem}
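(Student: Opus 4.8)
This estimate is due to Moler and Payne \cite{Moler-Payne:bounds-eigenvalues}, and the plan for a proof follows theirs. Write \(\varepsilon = \sup_{\partial\Omega}|\tilde u|\), let \(\{u_n\}\) be an \(L^2(\Omega)\)-orthonormal basis of Dirichlet eigenfunctions with \(-\Delta u_n = \lambda_n u_n\), and let \(T = (-\Delta)^{-1}\) be the Dirichlet Green operator, \((Tf)(x) = \int_\Omega G(x,y)f(y)\,dy\). Two elementary facts drive the argument. First, if \(h\) is the harmonic function in \(\Omega\) agreeing with \(\tilde u\) on \(\partial\Omega\), then \(\tilde u = h + \tilde\lambda\, T\tilde u\): the difference of the two sides has zero Laplacian (since \(\Delta\tilde u = -\tilde\lambda\tilde u\)) and zero boundary trace, and the maximum principle gives \(\|h\|_{L^\infty(\Omega)} \le \varepsilon\), hence \(\|h\|_{L^2(\Omega)} \le \sqrt{|\Omega|}\,\varepsilon = \mu\,\|\tilde u\|_2\). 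Second, Cauchy--Schwarz gives the pointwise bound \(|(Tf)(x)| \le g(x)\,\|f\|_2\) with \(g\) as in the statement; this is the only mechanism by which an \(L^2\) estimate will be turned into a bound valid at each \(x\).

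The plan is then to expand everything in the eigenbasis. Writing \(\tilde u = \sum_n c_n u_n\) and \(h = \sum_n h_n u_n\) and comparing coefficients in the integral equation yields \(h_n = c_n(\lambda_n - \tilde\lambda)/\lambda_n\); since \(\alpha > 0\) forces \(\tilde\lambda \ne \lambda_n\) whenever \(\lambda_n \ne \lambda_k\), we may solve \(c_n = \lambda_n h_n/(\lambda_n - \tilde\lambda)\) on those modes. Let \(P_k\) be the orthogonal projection onto the \(\lambda_k\)-eigenspace and take \(u_k\) to be the multiple of \(P_k\tilde u\) with \(\|u_k\|_2 = \|\tilde u\|_2\); this is a \(\lambda_k\)-eigenfunction, and \(\lambda_k\) is the eigenvalue supplied by Theorem~\ref{thm:FoxHenriciMoler}, which in particular gives \(\lambda_k \le \tilde\lambda/(1-\mu)\).

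Next I would estimate \(e := \tilde u - u_k\) by splitting it into the component transverse to the \(\lambda_k\)-eigenspace, \((I-P_k)\tilde u = \sum_{\lambda_n\ne\lambda_k}c_n u_n\), and the component arising from rescaling \(P_k\tilde u\) into \(u_k\). For the transverse part, \(|c_n| \le |h_n|/\alpha\) on modes with \(\lambda_n\ne\lambda_k\) gives the \(L^2\) bound \(\|(I-P_k)\tilde u\|_2 \le \|h\|_2/\alpha \le \mu\|\tilde u\|_2/\alpha\) (a Davis--Kahan-type gap estimate); applying \(I-P_k\) to the integral equation and using \([T,P_k]=0\) gives \((I-P_k)\tilde u = (I-P_k)h + \tilde\lambda\,T\bigl((I-P_k)\tilde u\bigr)\), so feeding in \(|h(x)|\le\varepsilon\), the eigenfunction estimate \(|(P_k h)(x)| \le \lambda_k g(x)\|P_k h\|_2 \le \mu\,\lambda_k g(x)\|\tilde u\|_2\), the Cauchy--Schwarz bound for \(T\), and \(\lambda_k\le\tilde\lambda/(1-\mu)\), one controls this part by a multiple of \(\varepsilon\) involving \(g(x)\tilde\lambda\), \(\tfrac1{1-\mu}\) and \(\tfrac1\alpha\). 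The rescaling part is handled with \(\|P_k\tilde u\|_2^2 = \|\tilde u\|_2^2 - \|(I-P_k)\tilde u\|_2^2\), which makes the scaling factor differ from \(1\) by \(O\bigl((\mu/\alpha)^2\bigr)\); combined with \(|u_k(x)| \le \lambda_k g(x)\|\tilde u\|_2 \le \tilde\lambda g(x)\|\tilde u\|_2/(1-\mu)\), this produces the remaining \(\tfrac1\alpha\cdot\tfrac{\mu^2}{\alpha^2}\) term. Collecting the pieces and using \(\mu\|\tilde u\|_2 = \sqrt{|\Omega|}\,\varepsilon\) gives the stated inequality; the exact form of the constants is a matter of routine bookkeeping.

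The hard part is the passage from \(L^2\) to pointwise. The spectral gap \(\alpha\) delivers \(L^2\)-smallness of \((I-P_k)\tilde u\) more or less for free, but the conclusion must hold at every interior point, and this is exactly where the Green-function weight \(g(x) = \|G(x,\cdot)\|_{L^2(\Omega)}\) enters; one must also know \(g(x) < \infty\), which holds in two dimensions since \(G(x,\cdot)\) has only a logarithmic singularity (and more generally requires \(n \le 3\)). A secondary subtlety is that \(\tilde\lambda\) lies close to \(\lambda_k\), so the resolvent \((I - \tilde\lambda T)^{-1}\) is nearly singular and the integral equation cannot simply be inverted; one has to peel off the \(\lambda_k\)-eigenspace explicitly, and tracking the normalization of \(P_k\tilde u\) through that step is what generates the \(\mu^2/\alpha^2\) correction in the final bound.
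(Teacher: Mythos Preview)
The paper does not prove this theorem; it is quoted from \cite{Moler-Payne:bounds-eigenvalues} and used as a black box. Your sketch is a faithful reconstruction of the Moler--Payne argument itself---harmonic lift $h$ of the boundary data, Green-operator identity $\tilde u = h + \tilde\lambda\,T\tilde u$, eigenfunction expansion with the spectral gap $\alpha$ controlling the off-diagonal Fourier coefficients, and Cauchy--Schwarz against $g(x)=\|G(x,\cdot)\|_{2}$ to pass from $L^{2}$ to pointwise bounds---so there is no alternative proof in the paper to contrast with.

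One caveat on the ``routine bookkeeping'' you defer. Following your own estimates, the transverse part satisfies $\|(I-P_{k})\tilde u\|_{2}\le\|h\|_{2}/\alpha\le\sqrt{|\Omega|}\,\varepsilon/\alpha$, and after applying $T$ pointwise this contributes a term of size $\tilde\lambda\,g(x)\sqrt{|\Omega|}\,\varepsilon/\alpha$. The displayed inequality in the paper has $\tilde\lambda\,g(x)\,\varepsilon/\alpha$ with no $\sqrt{|\Omega|}$. Whether this discrepancy is a normalization convention in the original reference or a transcription slip here is worth checking against \cite{Moler-Payne:bounds-eigenvalues} directly before you rely on the exact constant; your sketch glosses over precisely the step where this factor would appear or cancel.
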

To compute this bound we need an upper bound for \(g(x)\) and a lower
bound for \(\alpha\). Using \cite[Theorem 2.4, Corollary
2.2]{Bandle:isoperimetric-inequalities-book} with $p=2$, we obtain
that
\begin{align}\label{gbound}
g(x) \leq \frac{1}{4\pi}\sqrt{2|\Omega|}
\end{align}
\begin{remark}
  The bound \eqref{gbound} is far from optimal (especially when
  $x \to \partial \Omega$), but we have preferred to keep a simple,
  uniform bound.
\end{remark}

\begin{remark}
It may be possible to obtain $L^\infty$ bounds by deriving higher order estimates and using the Sobolev embedding as in \cite{Plum:H2-estimates-elliptic-bvp} but our method does not require any additional explicit constant.
\end{remark}

We are now ready to handle the 2-cluster. Let \(\Lambda\) be the union
of the enclosures of the eigenvalues \(\tilde{\lambda}_{3}\) and
\(\tilde{\lambda}_{4}\) as given by Theorem \ref{thm:FoxHenriciMoler}.
Then we have the following result.

\begin{lemma}
  \label{lemma:2-cluster}
  Let \(r\) be the radius of \(\Lambda\) and let \(\Lambda'\) be the
  interval with the same midpoint as \(\Lambda\) and radius
  \(\frac{17r}{16}\). Then there are at least two eigenvalues in the
  interval \(\Lambda'\).
\end{lemma}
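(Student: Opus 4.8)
The plan is to argue by contradiction. Suppose $\Lambda'$ contains at most one eigenvalue. By Theorem \ref{thm:FoxHenriciMoler}, applied to the approximate pairs $(\tilde\lambda_3,\tilde u_3)$ and $(\tilde\lambda_4,\tilde u_4)$, there is an eigenvalue $\lambda_{k_3}$ in the enclosure of $\tilde\lambda_3$ and an eigenvalue $\lambda_{k_4}$ in the enclosure of $\tilde\lambda_4$; both enclosures lie inside $\Lambda\subset\Lambda'$. If $\Lambda'$ has at most one eigenvalue then necessarily $\lambda_{k_3}=\lambda_{k_4}=:\lambda$, i.e.\ both approximate pairs are approximations of (eigenfunctions in the eigenspace of) one and the same eigenvalue. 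First I would record the quantitative consequence of this hypothesis: since $\Lambda'$ has radius $\frac{17r}{16}$ and midpoint equal to that of $\Lambda$, every other eigenvalue $\lambda_n\neq\lambda$ lies outside $\Lambda'$, which gives an explicit lower bound on $\alpha=\min_{\lambda_n\neq\lambda}\frac{|\lambda_n-\tilde\lambda_i|}{|\lambda_n|}$ for $i=3,4$ (the distance from $\tilde\lambda_i\in\Lambda$ to the complement of $\Lambda'$ is at least $\frac{17r}{16}-r=\frac{r}{16}$, and $|\lambda_n|$ is bounded below by the known lower bound on the spectrum, or above by a crude upper bound, to turn this into a usable number).

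Next I would apply Theorem \ref{thm:L-inifinity-bounds} to each of $(\tilde\lambda_3,\tilde u_3)$ and $(\tilde\lambda_4,\tilde u_4)$, using the bound $g(x)\le\frac{1}{4\pi}\sqrt{2|\Omega|}$ from \eqref{gbound}, the values of $\mu$ and $\sup_{\partial\Omega}|\tilde u_i|$ from Table \ref{table:mu-bound}, and the lower bound on $\alpha$ just obtained. This produces explicit constants $\varepsilon_3,\varepsilon_4$ with $\|\tilde u_i - u_{k_i}\|_{L^\infty(\Omega)}\le\varepsilon_i$ for some eigenfunctions $u_{k_3},u_{k_4}$ in the eigenspace of $\lambda$. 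Normalizing (say by $\|u_{k_i}\|_{L^2}$ or by evaluating at a convenient point), one then compares $\tilde u_3$ and $\tilde u_4$: since both $u_{k_3}$ and $u_{k_4}$ lie in the same (at most two-dimensional, by Theorem \ref{thm:1} and the separation already established) eigenspace, I would exhibit a point $x_0\in\Omega$ — or better, a pair of points, or a linear functional — witnessing that $\tilde u_3$ and $\tilde u_4$ are \emph{far} from being proportional, in a way that the $\varepsilon_i$-closeness to $u_{k_3},u_{k_4}$ cannot accommodate if $u_{k_3},u_{k_4}$ spanned a space of dimension $\le 1$... but the eigenspace could be two-dimensional. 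The cleaner route, matching the symmetry remark in Section \ref{sec:constructing-approximation}: $\tilde u_3$ is odd in $y$ and $\tilde u_4$ is odd in $x$, so they are \emph{orthogonal by symmetry type}; if $\lambda_{k_3}=\lambda_{k_4}=\lambda$ with a two-dimensional eigenspace, that eigenspace would have to contain two functions of these incompatible symmetry types, which is consistent — so instead I would use that $u_{k_3}-u_{k_4}$ (suitably normalized) is both within $\varepsilon_3+\varepsilon_4$ of $\tilde u_3-\tilde u_4$ and, being a difference of two eigenfunctions of the \emph{same} eigenvalue, is itself an eigenfunction (or zero) of that eigenvalue; comparing $L^\infty$ (or $L^2$) norms of $\tilde u_3-\tilde u_4$ against what is forced yields the contradiction.

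Concretely, the argument I would carry out: normalize so $\|\tilde u_3\|_{L^2}=\|\tilde u_4\|_{L^2}=1$ (the $L^2$ lower bounds needed for this are exactly the ones computed for Table \ref{table:mu-bound}). If $\lambda_{k_3}=\lambda_{k_4}=\lambda$, then both $\tilde u_3$ and $\tilde u_4$ are, up to $L^\infty$-errors $\varepsilon_3,\varepsilon_4$, eigenfunctions for $\lambda$; projecting $\tilde u_3$ onto the $\lambda$-eigenspace and likewise $\tilde u_4$, and using that the complementary spectral components are small (controlled again by $\mu_i$ and the spectral gap $\alpha$ via the standard estimate $\|\tilde u_i - P_\lambda\tilde u_i\|\le C\mu_i/\alpha$), one gets that $P_\lambda\tilde u_3$ and $P_\lambda\tilde u_4$ are nearly unit vectors in a space of dimension $\le 2$ that are nearly orthogonal (their inner product $\langle\tilde u_3,\tilde u_4\rangle$ is small — indeed by the incompatible $x$- vs.\ $y$-parity it is \emph{exactly} $0$). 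So the $\lambda$-eigenspace is at least two-dimensional, hence (given the $\le 4$-below-$66.07$ count from Section \ref{sec:separating-first-four}, i.e.\ $\lambda=\lambda_3=\lambda_4$) it is exactly two-dimensional and equals $\mathrm{span}(P_\lambda\tilde u_3,P_\lambda\tilde u_4)$ approximately. But then \emph{both} $\tilde\lambda_3$ and $\tilde\lambda_4$ are approximations of this single eigenvalue $\lambda$, and I can sharpen the Moler--Payne eigenvalue bound: applying Theorem \ref{thm:L-inifinity-bounds}'s companion eigenvalue estimate to the two-dimensional cluster (à la Kato/temple, using that the computed Rayleigh quotients of $\tilde u_3,\tilde u_4$ differ), the single eigenvalue would have to simultaneously satisfy two incompatible tight enclosures unless it also admits a genuinely two-dimensional refinement — and the arithmetic, with the radius-$\frac{17r}{16}$ slack chosen precisely to absorb the error terms $\varepsilon_i$ and the gap loss, closes the loop to show two eigenvalues (counted with multiplicity) must lie in $\Lambda'$. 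The main obstacle I anticipate is the bookkeeping in this last step: the eigenspace might legitimately be two-dimensional, so the contradiction cannot be "$\tilde u_3\parallel\tilde u_4$ is impossible" but must instead be a quantitative statement that a single eigenvalue (with its two-dimensional space) forces a \emph{wider} enclosure than $\Lambda$ — equivalently, that the slack factor $\frac{17}{16}$ is exactly what is needed, and getting that constant to work requires plugging the numbers from Tables \ref{table:finding-approximations} and \ref{table:mu-bound} into the $\varepsilon_i$ estimate and checking $\varepsilon_3+\varepsilon_4 < \frac{r}{16}\cdot(\text{normalization})$ with room to spare.
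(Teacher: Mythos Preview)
Your setup is exactly right and matches the paper: argue by contradiction, observe that both enclosures sit inside $\Lambda$, so under the hypothesis the eigenvalues $\lambda_{k_3}$ and $\lambda_{k_4}$ coming from Theorem~\ref{thm:FoxHenriciMoler} coincide, and the nearest \emph{other} eigenvalue lies outside $\Lambda'$, giving $\alpha\ge r/16$. Feeding this, together with \eqref{gbound} and the numbers in Table~\ref{table:mu-bound}, into Theorem~\ref{thm:L-inifinity-bounds} yields uniform $L^\infty$ bounds $d_3,d_4$ on $|\tilde u_3-u_3|$ and $|\tilde u_4-u_4|$. So far so good.

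The gap is in what comes next. Eigenvalues here are counted with multiplicity (this is the convention fixed in the introduction, $\lambda_1<\lambda_2\le\lambda_3\le\ldots$, and it is what the statement ``at least two eigenvalues in $\Lambda'$'' means). Hence the contradiction hypothesis ``at most one eigenvalue in $\Lambda'$'' already forces the eigenspace at $\lambda$ to be one-dimensional, so the eigenfunctions $u_3,u_4$ produced by Theorem~\ref{thm:L-inifinity-bounds} satisfy $u_3=Cu_4$ for some real $C$. Your worry that ``the eigenspace could be two-dimensional'' is precisely the case that is \emph{excluded} by the hypothesis; a two-dimensional eigenspace is two eigenvalues in $\Lambda'$ and the lemma is done. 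Because you did not use this, you were led into the projection/orthogonality detour and never reached a clean contradiction (indeed, once you show $P_\lambda\tilde u_3$ and $P_\lambda\tilde u_4$ are nearly orthogonal unit vectors you have already forced the eigenspace to be two-dimensional and should stop there; everything after that paragraph is superfluous and the ``wider enclosure'' argument does not make sense).

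The paper's argument is much shorter. With $u_3=Cu_4$, one only needs to rule out a scalar multiple. Pick two interior points $p_1=(\tfrac12,\tfrac12)$ and $p_2=(-\tfrac12,\tfrac12)$ and use the $L^\infty$ bounds $d_3,d_4$ to certify signs: one checks numerically that $\tilde u_3(p_1)+d_3<0$ and $\tilde u_4(p_1)+d_4<0$, so $u_3(p_1),u_4(p_1)<0$ and hence $C>0$; but also $\tilde u_3(p_2)-d_3>0$ while $\tilde u_4(p_2)+d_4<0$, so $u_3(p_2)>0$ and $u_4(p_2)<0$, contradicting $C>0$. That is the whole proof. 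Your ``pair of points'' instinct was correct; you abandoned it only because of the multiplicity confusion.
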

\begin{proof}
  The proof is by contradiction. Assume that there is only one
  eigenvalue in \(\Lambda'\). From Theorem
  \ref{thm:L-inifinity-bounds} we get that there are two
  eigenfunctions \(u_{3}\) and \(u_{4}\) with eigenvalues
  \(\lambda_{3}\) and \(\lambda_{4}\) in \(\Lambda\). By our assumption, we must have
  \(\lambda_{3} = \lambda_{4}\) and also \(u_{3} = Cu_{4}\) for some
  \(C \in \mathbb{R}\).

  Since \(\lambda_{3} = \lambda_{4}\) lie in \(\Lambda\) and we assume
  that there are no other eigenvalues in \(\Lambda'\) we get that
  \(\alpha\) in Theorem \ref{thm:L-inifinity-bounds} is lower bounded
  by \(\frac{r}{16}\). With a lower bound for \(\alpha\) and upper
  bounds for \(g(x)\), \(\sup_{x \in \partial\Omega} |\tilde{u}(x)|\)
  and \(\mu\) we can use Theorem \ref{thm:L-inifinity-bounds} to
  compute an upper bound \(d_{3}\) for
  \(|\tilde{u}_{3}(x) - u_{3}(x)|\) and \(d_{4}\) for
  \(|\tilde{u}_{4}(x) - u_{4}(x)|\).

  Now consider the points \(p_{1} = (1/2, 1/2)\) and
  \(p_{2} = (-1/2, 1/2)\). We can evaluate \(\tilde{u}_{3}\) and
  \(\tilde{u}_{4}\) at these points. We find that
  \(\tilde{u}_{3}(p_{1}) + d_{3} < 0 \text{ and } \tilde{u}_{4}(p_{1})
  + d_{4} < 0\) so both \(u_{3}\) and \(u_{4}\) must have the same
  sign at \(p_1\), this means that \(u_{3} = Cu_{4}\) for some
  \(C > 0\) and in particular \(u_{3}\) and \(u_{4}\) must have the
  same sign everywhere. Furthermore we find that
  \(\tilde{u}_{3}(p_{2}) - d_{3} > 0\) so \(u_{3}\) is positive at
  \(p_2\) but \(\tilde{u}_{4}(p_{2}) + d_{3} < 0\) so \(u_{4}\) must
  be negative at \(p_2\). This would contradict that \(u_{3}\) and
  \(u_{4}\) have the same sign and hence there must be at least two
  eigenvalues in \(\Lambda'\).
\end{proof}

\begin{remark}
  While Lemma \ref{lemma:2-cluster} proves that there are at least two
  eigenvalues in \(\Lambda'\) it does not prove that there has to be a
  double eigenvalue.
\end{remark}

\begin{remark}
  We remark that it is fundamental for the cluster to be a 2-cluster
  in the proof. Otherwise, there are no lower bounds on \(\alpha\)
  available, in order to apply Theorem \ref{thm:L-inifinity-bounds}.
  We outline a different (though more costly) strategy for the general
  case.

  The idea is to make use of the symmetries of the domain and the
  approximate eigenfunctions and compute bounds on the smaller domain
  \(\Omega' \subset \Omega\) which are then extended to \(\Omega\).
  For example, for \(\tilde{u}_{3}\) we could do the following.
  Consider \(\tilde{u}_{3}\) as an approximate eigenfunction on the
  domain \(\Omega'\) given by the right half of \(\Omega\). If we can
  compute \(L^{\infty}\) bounds for \(\tilde{u}_{3}\) on \(\Omega'\)
  then these bounds also apply to \(\Omega\) extending by symmetry.
  When computing bounds on \(\Omega'\) using Theorem
  \ref{thm:L-inifinity-bounds} we would get exactly the same bound for
  \(\mu\) since by construction, \(\tilde{u}_{3}\) is identically
  equal to zero on the only additional boundary and the factor
  difference in norm and area scale out. In \(\Omega'\),
  \(\tilde{u}_{3}\) corresponds to a simple eigenvalue and we could
  get a lower bound for \(\alpha\) without having to deal with a
  cluster of eigenvalues. This would require more work than Lemma
  \ref{lemma:2-cluster} since we would have to control the spectrum of
  \(\Omega'\) as well, but it has the benefit that it would work for
  clusters with more than two eigenvalues.
\end{remark}

We have now proved that there must be one eigenvalue in
\([30 \pm 6.1]\), one in \([63.21 \pm 6.89\cdot 10^{-3}]\) and two in
\(\Lambda'\). Since we have already proved that there are at most four
eigenvalues below \(66.0709\) we can conclude that we have found them
all. In particular this means that
\((\tilde{\lambda}_{2}, \tilde{u}_{2})\) must indeed correspond to the
second eigenpair and the eigenvalue closest to it lies in
\(\Lambda'\).

\section{Analysing the nodal line and conclusion of the proof}
\label{sec:nodal-line}
In this section we will assume that the sign of \(\tilde{u}_{2}\) is
taken such that it is positive at the center. To prove that the nodal
line is isolated we will construct a closed path \(\Gamma\) around the
center of the domain. Let \(\tilde{\Omega}\) be the region enclosed by
\(\Gamma\), \(\Gamma\) will be taken such that \(\tilde{\Omega}\) does
not intersect the boundary of \(\Omega\). We will prove that \(u_{2}\)
is strictly negative on \(\Gamma\) and positive in some point in
$\tilde{\Omega}$. This would imply that \(u_{2}\) changes sign
somewhere inside of $\tilde{\Omega}$ and hence at least part of the
nodal line must be inside it. Since \(u_{2}\) is strictly negative on
\(\Gamma\) we also get that the nodal line can not cross \(\Gamma\),
thus there is a closed curve in $\tilde{\Omega}$ belonging to the
nodal line. By Courant's nodal domain Theorem
\cite{Courant-Hilbert:mathematical-physics-vol1-book} there can only
be two connected components and hence the whole nodal line is closed
and contained in $\tilde{\Omega}$, therefore not touching $\partial \Omega$.

The choice of \(\Gamma\) is seen in Figure \ref{fig:gamma}. It
consists of a straight line between \((d, 0)\) and
\((d, \tan(\pi/6)d)\) (the solid part of the red line in Figure
\ref{fig:gamma}) which is mirrored in the \(x\)-axis and extended in a
6-fold symmetry way. Since \(\tilde{u}_{2}\) satisfies the same
symmetries it is enough to bound it on this straight line to get a
bound on \(\Gamma\). The value of \(d\) is chosen so that the value of
\(\tilde{u}_{2}(d, 0)\) is as negative as possible. We picked
\(d = 0.38483177115481165\).
\begin{figure}
  \centering
  \includegraphics[height=4cm]{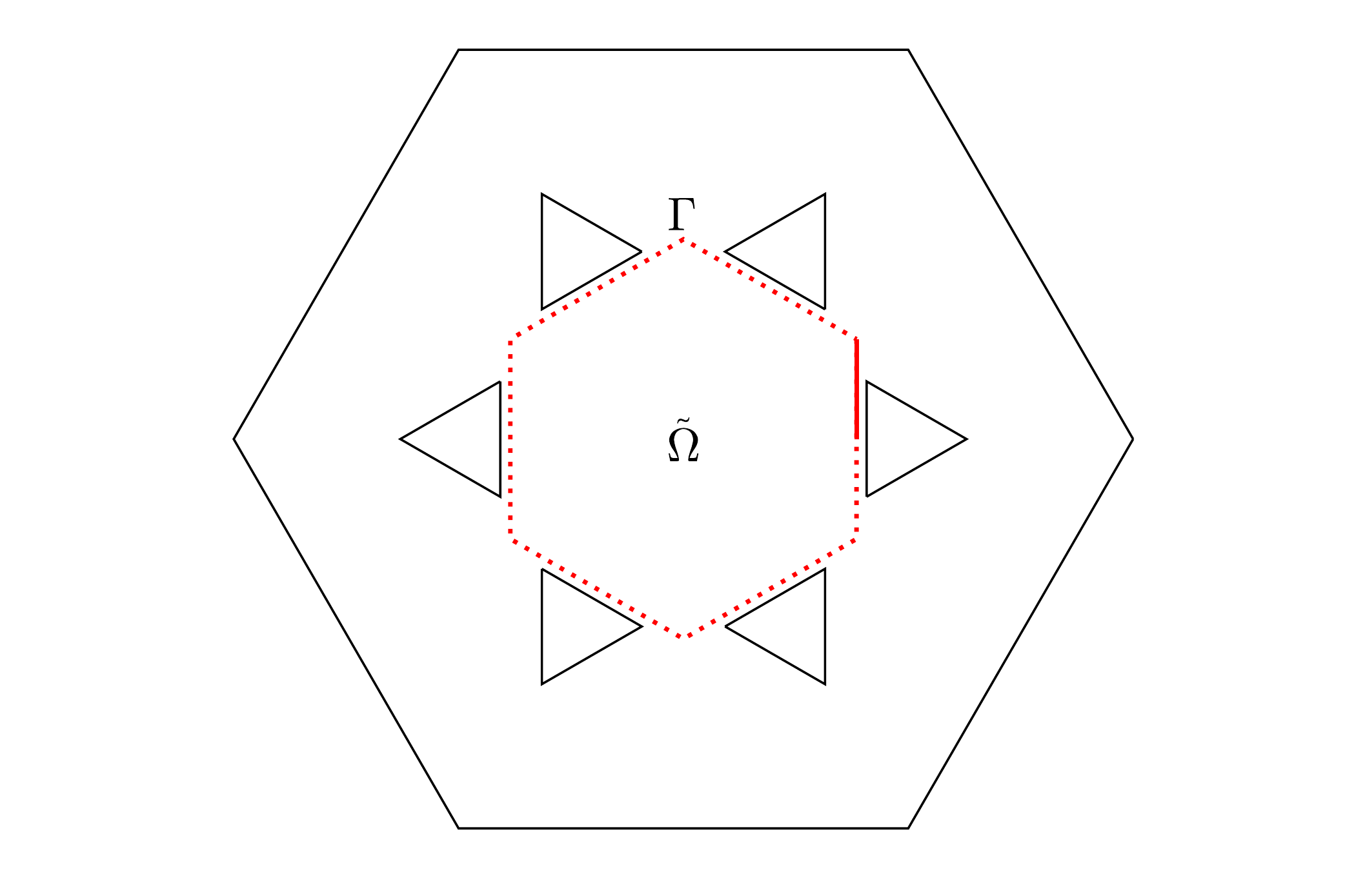}
  \caption{Shows the path \(\Gamma\), enclosing the region
    \(\tilde{\Omega}\), in red. By symmetry arguments it is enough to
    consider the value of \(\tilde{u}_{2}\) on the solid part of the
    line.}
  \label{fig:gamma}
\end{figure}

We compute an upper bound for \(\tilde{u}_{2}\) on \(\Gamma\) using
the same techniques as when computing the bound on the boundary. In
this case it is much less costly since there are less cancellations
between the terms of \(\tilde{u}_{2}\) and we are further away from
the charge points. We get the upper bound
\(\gamma = -4.4929\cdot 10^{-5}\). We are left to prove that the error
bound for \(\tilde{u}_{2}\) from Theorem \ref{thm:L-inifinity-bounds}
is less than \(|\gamma|\) to conclude that \(u_{2}\) must be negative
on all of \(\Gamma\).

From Lemma \ref{lemma:2-cluster} we know that the closest eigenvalue
to \(\tilde{\lambda}_{2}\) is in the interval \(\Lambda'\). This gives
us the bound \(\alpha \geq 0.3727\). Together with the upper bound for
\(g(x)\) from \eqref{gbound} and the upper bounds for \(\mu\) and
\(\max_{x \in \partial\Omega}|\tilde{u}_{2}(x)|\) from Table
\ref{table:mu-bound}, Theorem \ref{thm:L-inifinity-bounds} gives us
\begin{equation*}
  |u_{2}(x) - \tilde{u}_{2}(x)| \leq 4.2162 \cdot 10^{-5}.
\end{equation*}
Since \(|\gamma| > 4.2162 \cdot 10^{-5}\) we conclude that \(u_{2}\)
is negative on all of \(\Gamma\).

Finally we have to prove that \(u_{2}\) is positive at some point in $\tilde{\Omega}$, for that we just take the point \((1/10, 0)\) for
which we have
\(\tilde{u}_{2}(1/10, 0) \in [0.01342 \pm 3.27\cdot 10^{-6}]\). Since
this is greater than the error bound for \(\tilde{u}_{2}\) we can
conclude that \(u_{2}\) is positive on at least one point in $\tilde{\Omega}$, which implies that the nodal line for \(u_{2}\) must be fully contained
in $\tilde{\Omega}$ and hence is closed. This finishes the proof of Theorem \ref{MainThm}.

\section{Details of the Implementation}
\label{sec:details-of-implementation}

The code\footnote{Available at
  \url{https://github.com/Joel-Dahne/PaynePolygon.jl}.} for the
computer assisted parts is implemented in Julia
\cite{Bezanson-Edelman-Karpinski-Shah:julia} and makes use of Arb
\cite{Johansson:Arb} for the rigorous parts of the numerics. The part
related to Section \ref{sec:separating-first-four} is implemented from
scratch whereas the rest relies heavily on the code from
\cite{Dahne-Salvy:enclosures-eigenvalues} with added support for new
geometries and new types of basis functions. Roughly the code is
divided into three parts:
\begin{enumerate}
\item The code for computing the lower bound of \(\lambda_{h,5}\)
  using the FEM method. The computation of \(Q\) uses standard
  eigenvalue routines and the verification is then easily done using
  Arb. It is complicated by the fact that the matrix \(M\) is very
  large (\(6084 \times 6084\)) and that standard double precision
  computations give an \(s\) in Lemma \ref{lemma:2} which is not quite
  small enough for the separation to work. To get higher precision we
  do the computations with so called double-doubles
  \cite{Shewchuk-adaptive-precision-fpa} through David K. Zhang's
  Julia library for multi-float
  computations\footnote{https://github.com/dzhang314/MultiFloats.jl}.
  This has the drawback that there are less specialised eigenvalue
  routines and the computations thus take longer, hours instead of
  minutes, but otherwise it works well.
\item The code for computing the approximate eigenfunctions is mostly
  the same as in \cite{Dahne-Salvy:enclosures-eigenvalues} with some
  modifications for the planar case. The collocation points are placed
  root-exponentially close to the vertices, in line with the
  recommendations of \cite{Gopal-Trefethen:laplace-solver-detailed}.
  For the lightning charges we use standard double precision. Between
  the expansions at the vertices of the hexagon and the expansion at
  the center we see very large cancellations and we found that
  increasing the precision helped. For the first eigenfunction 64 bits
  was enough and for the third and fourth we found 128 bits to
  suffice. For the third eigenfunction the cancellations were even
  larger and we ended up using 384 bits of precision.
\item The computations required for the verification of the
  approximate solutions, mostly the lower bound of the norm and the
  upper bound on the maximum for the approximate eigenfunctions, is
  for the most part exactly the same as in
  \cite{Dahne-Salvy:enclosures-eigenvalues}. The subset of the domain
  where the norm is lower bounded is tuned for each eigenfunction and
  given in Figure \ref{fig:norm-subsets}.
\end{enumerate}

The calculations were run on relatively old hardware, the FEM part on
an Intel Xeon CPU E5-2620 with 48GB of memory and the rest on an Intel
Core i7-3770 with 16GB of memory. The most time consuming part of the
computations is the FEM method. The computation of \(\tilde{Q}\) took
around 30 hours and then an additional 2 hours for the verification
with Arb. Computation of the approximate eigenfunctions took around 10
seconds for \(\tilde{u}_{1}\), 6 hours for \(\tilde{u}_{2}\) and 5
minutes each for \(\tilde{u}_{3}\) and \(\tilde{u}_{4}\). The bounds
for the norm took less than 20 minutes in total whereas the bound for
the maximum took around 10 seconds for \(\tilde{u}_{1}\), 6 hours for
\(\tilde{u}_{2}\) and half an hour each for \(\tilde{u}_{3}\) and
\(\tilde{u}_{4}\).

\begin{figure}
  \centering
  \includegraphics[width=0.8\textwidth]{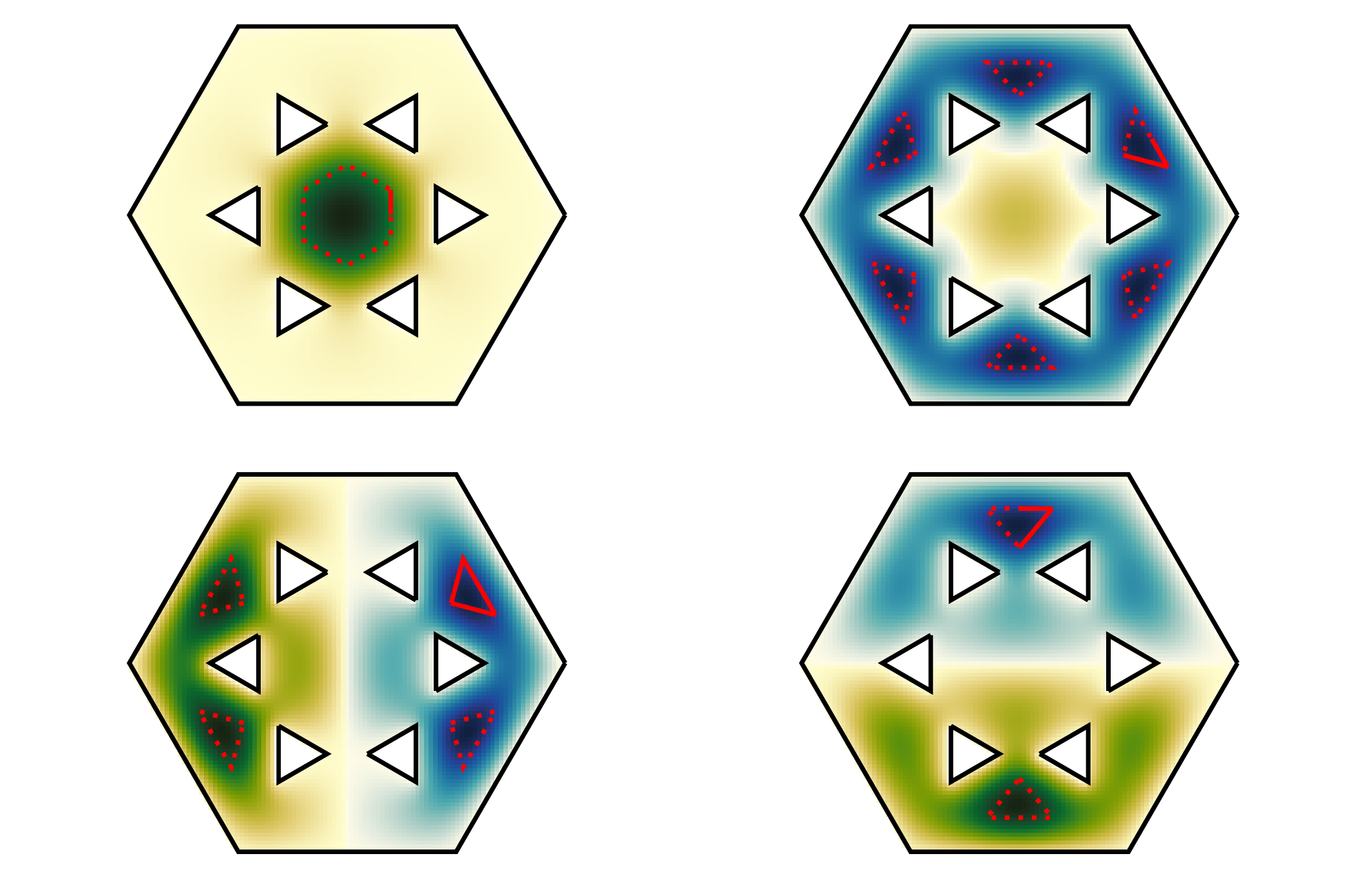}
  \caption{The part of the domain where the norm of each eigenfunction
    is lower bounded is marked with red lines. For symmetry reasons
    it is enough to bound the eigenfunctions on the parts with solid
    lines.}
  \label{fig:norm-subsets}
\end{figure}

\section*{Acknowledgments}

JD was partially supported by the European Research Council through ERC-StG-852741-CAPA. JGS was partially supported by the European Research Council through ERC-StG-852741-CAPA, by NSF through Grant NSF DMS-1763356 and by the Princeton Summer Program for Mathematics Majors. KH was partially supported by the Princeton Summer Program for Mathematics Majors. We thank Uppsala University for computing facilities (Haddock Cluster).

\bibliographystyle{acm}
\bibliography{references}

\begin{tabular}{l}
\textbf{Joel Dahne} \\
{Department of Mathematics} \\
{Uppsala University} \\
{L\"agerhyddsv\"agen 1, 752 37, Uppsala, Sweden} \\
{Email: joel.dahne@math.uu.se} \\ \\
\textbf{Javier G\'omez-Serrano}\\
{Department of Mathematics} \\
{Brown University} \\
{Kassar House, 151 Thayer St.} \\
{Providence, RI 02912, USA} \\ \\
{and} \\ \\
{Departament de Matem\`atiques i Inform\`atica} \\
{Universitat de Barcelona} \\
{Gran Via de les Corts Catalanes, 585} \\
{08007, Barcelona, Spain} \\
{Email: javier\_gomez\_serrano@brown.edu, jgomezserrano@ub.edu} \\ \\
\textbf{Kimberly Hou}\\
{Department of Mathematics}\\
{Princeton University} \\
{Fine Hall, Washington Rd,}\\
{Princeton, NJ 08544, USA}\\
{Email: klhou@princeton.edu}\\
\end{tabular}

\end{document}